\DeclareFontFamily{U}{mathc}{}
\DeclareFontShape{U}{mathc}{m}{it}%
{<->s*[1.03] mathc10}{}
\DeclareMathAlphabet{\Cal}{U}{mathc}{m}{it}
\crefname{thm}{Theorem}{Theorems}
\crefname{dfn}{Definition}{Definitions}
\crefname{dfnprop}{Definition-Proposition}{Definition-Proposition}
\crefname{prop}{Proposition}{Propositions}
\crefname{lem}{Lemma}{Lemmas}
\crefname{cor}{Corollary}{Corollaries}
\crefname{clm}{Claim}{Claims}
\crefname{ass}{Assumption}{Assumption}
\crefname{cond}{Condition}{Condition}
\crefname{fct}{Fact}{Facts}
\crefname{rmk}{Remark}{Remarks}
\crefname{eg}{Example}{Examples}
\crefname{figure}{Figure}{Figures}
\crefname{table}{Table}{Tables}
\crefname{section}{Section}{Sections}
\crefname{subsection}{Subsection}{Subsections}
\crefname{appendix}{Appendix}{Appendices}
\crefname{equation}{}{}
\crefname{main}{Theorem}{Theorems}
\theoremstyle{definition}
\newtheorem{thm}{Theorem}[section]
\newtheorem{main}{Theorem}
\newtheorem{dfn}[thm]{Definition}
\newtheorem{prop}[thm]{Proposition}
\newtheorem{lem}[thm]{Lemma}
\newtheorem{cor}[thm]{Corollary}
\newtheorem{rmk}[thm]{Remark}
\newtheorem{eg}[thm]{Example}
\newtheorem*{rmk*}{Remark}
\newtheorem{cond*}{Condition}
\numberwithin{equation}{section}
\let\c@equation\c@thm 
\newcommand{\A}{{\Cal A}}
\newcommand{\C}{{\mathsf{C}}}
\newcommand{\D}{{\mathsf{D}}}
\renewcommand{\H}{{\mathsf{H}}}
\newcommand{\K}{\mathsf{K}}
\renewcommand{\k}{\mathbf{k}}
\renewcommand{\L}{\Cal {L}}
\newcommand{\M}{{\Cal M}}
\newcommand{\T}{{\mathsf{T}}}
\newcommand{\U}{{\mathsf{U}}}
\newcommand{\X}{{\mathsf{X}}}
\newcommand{\Y}{{\Cal Y}}
\newcommand{\ZZ}{\mathbb{Z}}
\newcommand{\NN}{\mathbb{N}}
\newcommand{\Hom}{\operatorname{Hom}\nolimits}
\newcommand{\End}{\operatorname{End}\nolimits}
\renewcommand{\op}{\mathrm{op}}
\DeclareMathOperator*{\colim}{colim}
\newcommand{\ind}{\mathsf{ind}\hspace{.01in}}
\newcommand{\add}{\mathsf{add}\hspace{.01in}}
\newcommand{\Filt}{\mathsf{Filt}\hspace{.01in}}
\newcommand{\proj}{\mathsf{proj}\hspace{.01in}}
\newcommand{\length}{\mathsf{length}\hspace{.01in}}
\renewcommand{\mod}{\mathsf{mod}\hspace{.01in}}
\newcommand{\thick}{\mathsf{thick}\hspace{.01in}}
\newcommand{\Kb}{\mathsf{K}^{\rm b}}
\newcommand{\per}{\mathsf{per}\hspace{.01in}}
\newcommand{\Db}{\mathsf{D}^{\mathsf{b}}}
\newcommand{\Dfd}{\mathsf{D}_{\mathsf{fd}}}
\newcommand{\Dfl}{\mathsf{D}_{\mathsf{fl}}}
    \newcommand{\RHom}{\mathbf{R}\strut\kern-.2em\operatorname{Hom}\nolimits}
    \newcommand{\RHOM}{\mathbf{R}\strut\kern-.2em\operatorname{HOM}\nolimits}
    \newcommand{\REnd}{\mathbf{R}\strut\kern-.2em\operatorname{End}\nolimits}
    \newcommand{\REND}{\mathbf{R}\strut\kern-.2em\operatorname{END}\nolimits}
\renewcommand{\ker}{\operatorname{Ker}\nolimits}
\newcommand{\Sim}{\mathsf{Sim}}
\DeclareMathOperator*{\hocolim}{hocolim}
\newcommand{\silt}{\mathsf{silt}}
\newcommand{\SMC}{\mathsf{SMC}}
\newcommand{\xto}{\xrightarrow}
\title{The correspondence between silting objects and $t$-structures for non-positive dg algebras}
\author{Riku Fushimi}
\date{\today}
\newcommand{\Addresses}{{
  \bigskip
  \footnotesize

  R. Fushimi, \textsc{Department of mathematics, Nagoya University, Chikusa-ku, Nagoya 464-8602, Japan}\par\nopagebreak
  \textit{E-mail address}: \texttt{fushimi.riku.h9@s.mail.nagoya-u.ac.jp}

}}
\begin{document}

\begin{abstract}

We establish a bijective correspondence between isomorphism classes of basic silting objects of $\per A$ and algebraic $t$-structures of $\Dfd(A)$ for locally finite non-positive dg algebra $A$ over a field $\k$ (more generally, we work in the setting of ST-pair inside an algebraic triangulated category). For a non-positive (topologically) homologically smooth dg $\k$-algebra $A$ whose zeroth cohomology is finite-dimensional, or for a non-positive proper dg $\k$-algebra $A$, this one-to-one correspondence was already known. The main result of this paper simultaneously generalizes the above two cases.

\end{abstract}
\maketitle

\tableofcontents

\section{Introduction}

The concept of silting objects is a generalization of tilting objects. It appeared for the first time in \cite{KV88} and is often applied to the study of $t$-structure and plays important roles in representation theory. For the path algebra of Dynkin diagram $\Delta$ over a field $\k$, Keller and Vossieck \cite{KV88} established a one-to-one correspondence between isomorphism classes of basic silting objects of $\Db(\mod \k\Delta)$ and bounded $t$-structures on $\Db(\mod \k\Delta)$.  In recent years, generalizations of this bijective correspondence have been studied in various settings. For a (topologically) homologically smooth non-positive differential graded (=dg) algebra $A$ over an algebraically closed field with finite-dimensional zeroth cohomology, Keller and Nicol{\'a}s \cite{KN2} established the bijective correspondence between isomorphism classes of basic silting objects of $\per A$ and algebraic $t$-structures (=bounded $t$-structures with length heart) on $\Dfd(A)$. For a finite-dimensional $\k$-algebra $\Lambda$,  the bijective correspondence between isomorphism classes of basic silting objects of $\Kb(\proj \Lambda)$ and algebraic $t$-structures on $\Db(\mod \Lambda)$ was shown by Koenig and Yang \cite{KoY14}. For a dg $\k$-algebra $A$ with finite-dimensional total cohomology, such a bijective correspondence was shown by Su and Yang \cite{SY19} for the case $\k$ is algebraically closed via the Koszul duality between dg algebras and $A_\infty$-algebras. For the arbitrary field case, Zhang showed it in \cite{Z23} by using the dg Koszul dual and results of Keller and Nicol{\'a}s \cite{KN2}.

The notion of ST-pair was introduced in \cite{AMY19} as the natural home of silting objects and $t$-structures. S stands for silting objects and T stands for $t$-structures. For an ST-pair $(\C,\D)$, each silting object of $\C$ induces an algebraic $t$-structure on $\D$. For every locally finite non-positive dg $\k$-algebra $A$, the pair $(\per A,\Dfd(A))$ is  ST-pair (see \cref{eg:non-positive-is-ST-pair}), and the correspondence from isomorphism classes of silting objects of $\per A$ to algebraic $t$-structures on $\Dfd(A)$ is consistent with what has been studied so far. In this paper, as a main theorem, we prove that the assignment from the isomorphism classes of silting objects of $\C$ to the algebraic $t$-structures on $\D$ is bijective, and we call the bijection \emph{ST-correspondence}.

\begin{main}(\cref{thm:four-bijection})\label{thm:A}
Let $(\C,\D)$ be an ST-pair inside an algebraic triangulated category $\T$. Then there exists a bijective correspondence between
\begin{itemize}
\item[(1)] isomorphism classes of basic silting objects of $\C$,
\item[(2)] bounded co-$t$-structures on $\C$,
\item[(3)] isomorphism classes of simple-minded collections of $\D$,
\item[(4)] algebraic $t$-structures on $\D$.
\end{itemize}
\end{main}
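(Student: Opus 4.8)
The plan is to prove Theorem~A by establishing a cycle of maps among the four sets and showing the composite is the identity. The natural starting point is the classical pair of mutually inverse bijections between silting objects and co-$t$-structures: given a basic silting object $M\in\C$, the pair $(\C_{\le 0},\C_{\ge 0})$ with $\C_{\ge 0}=\add\{M[i]:i\le 0\}^{\perp\perp}$-type aisles gives a bounded co-$t$-structure whose coheart is $\add M$, and conversely the coheart of a bounded co-$t$-structure on $\C$ is of the form $\add M$ for a silting object $M$; this is the Bondarko--Mendoza--S{\'a}enz correspondence and should be quoted or reproved in the ST-pair setting using only that $\C$ is Krull--Schmidt with the relevant finiteness. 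So (1)$\leftrightarrow$(2) is the ``easy'' edge. Dually, (3)$\leftrightarrow$(4) is the correspondence between simple-minded collections of $\D$ and algebraic $t$-structures: a simple-minded collection $\{S_1,\dots,S_n\}$ generates a bounded $t$-structure whose heart is the length category $\Filt\{S_1,\dots,S_n\}$, and conversely the simple objects of the (length) heart of an algebraic $t$-structure form a simple-minded collection. The content here is that $\Filt$ of a simple-minded collection is abelian and is the heart of a $t$-structure — this is essentially due to Al-Nofayee / Koenig--Yang and should transfer to $\D$ using the defining axioms of an ST-pair (boundedness of $\D$, finiteness of $\Hom$'s, and that $\D$ is the ``finite-dimensional'' part sitting inside $\T$).

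**The bridge between $\C$ and $\D$.** The crux is to connect the $\C$-side with the $\D$-side, i.e. to produce the edge (1)$\leftrightarrow$(3) (equivalently (2)$\leftrightarrow$(4)). Given a silting object $M$ with coheart $\add M$ and indecomposable summands $M_1,\dots,M_n$, I would define the candidate simple-minded collection by ``Koszul-dualizing'': let $S_1,\dots,S_n\in\D$ be the objects characterized by the biorthogonality relation $\Hom_\T(M_i,S_j[k])=\delta_{ij}\delta_{k0}\k$ (simple-minded with respect to the silting object). The existence of such $S_j$ is where one uses that $(\C,\D)$ is an ST-pair inside an algebraic triangulated category $\T$: the pairing $\Hom_\T(-,-)$ between $\C$ and $\D$ is a perfect-type duality of finite-dimensional spaces, and one constructs the $S_j$ as the simple tops of the ``projective'' objects $M_j$ under the $t$-structure attached to $M$, or equivalently via a Koszul/bar-type argument inside $\T$. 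One then checks the three defining properties of a simple-minded collection: $\Hom_\T(S_i,S_j[k])=0$ for $k<0$, $\End(S_i)$ is a division ring and $\Hom(S_i,S_j)=0$ for $i\ne j$, and that the $S_j$ generate $\D$ as a thick subcategory. The inverse map sends a simple-minded collection to the silting object obtained as the direct sum of the indecomposable ``projectives'' — this requires the silting object to exist in $\C$, for which the ST-pair axioms (in particular that silting objects of $\C$ and co-$t$-structures match up, combined with finiteness) are exactly what is needed.

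**Checking compatibility and closing the cycle.** With all four edges in hand I would verify that going around the square (1)$\to$(2)$\to$(4)$\to$(3)$\to$(1) returns the original silting object up to isomorphism; by Krull--Schmidt it suffices to match indecomposable summands, and this reduces to the biorthogonality relation being symmetric in $M_i$ and $S_j$. The truly delicate point — the one I expect to be the main obstacle — is the \emph{existence} half of the $\C$–$\D$ bridge in the stated generality: previous proofs (Keller--Nicol{\'a}s, Koenig--Yang, Su--Yang, Zhang) all exploit either homological smoothness (so that $\D_{\mathsf{fd}}(A)\subseteq\per A$, giving a concrete Koszul dual) or properness (so that $\per A\subseteq\D_{\mathsf{fd}}(A)$), and neither inclusion is available for a general locally finite non-positive dg algebra. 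The ST-pair axioms are designed precisely to axiomatize the common core of both situations, so the work is to run the simple-minded-collection/silting mutation-compatible argument using \emph{only} those axioms — the perfect pairing, boundedness, local finiteness, and the ambient algebraic category $\T$ — without ever invoking a containment between $\C$ and $\D$. A secondary technical point is ensuring that the hearts produced are genuinely length categories (finitely many simples, finite length objects), which again should follow from local finiteness of $\T$ together with the boundedness built into the ST-pair. Once existence is secured, uniqueness and the inverse maps are formal, and assembling the four bijections gives Theorem~A.
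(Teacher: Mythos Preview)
Your overall architecture matches the paper: (1)$\leftrightarrow$(2) via the silting/co-$t$-structure bijection, (3)$\leftrightarrow$(4) via the simple-minded-collection/algebraic-$t$-structure bijection, and the real content lies in (1)$\leftrightarrow$(3). You also correctly locate the hard direction --- surjectivity of $\phi\colon\silt(\C)\to\SMC(\D)$ --- and correctly diagnose that earlier proofs relied on one of the inclusions $\C\subseteq\D$ or $\D\subseteq\C$.

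But this is precisely where your proposal has a genuine gap. You say the inverse map sends $\L$ to ``the silting object obtained as the direct sum of the indecomposable `projectives'\,'' and that the ST-pair axioms are ``exactly what is needed'' --- yet you never say \emph{how} such projectives are built, nor why they land in $\C$. The heart $\H_\L$ is a length category, but nothing in the ST-pair axioms by themselves hands you projective objects in $\C$ dual to a given $\L$. Your phrase ``Koszul/bar-type argument inside $\T$'' is a gesture in the right direction, not a construction.

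The ingredient you are not using is the \emph{algebraicity} of $\T$, which in your write-up appears only as an ambient hypothesis. In the paper it does the main work. Fix a dg enhancement $\A$ of $\T$; for a simple-minded collection $\L$ of $\D$ with sum $L$, form the locally finite positive dg algebra $B_\L$ (a derived endomorphism algebra of $L$) and the contravariant functor $\Psi_\L\colon\T\to\D(B_\L)$ induced by $\Hom_\A(-,L)$. The technical core (\cref{prop:negative-to-positive} and \cref{lem:smc-to-positive-dg-algebra}) shows, via a homotopy-colimit and truncation argument using the $t$-structure from the ST-pair, that $\Psi_\L$ restricts to equivalences
\[
\D\simeq\per(B_\L)^{\op}\qquad\text{and}\qquad \C\simeq\Dfl(B_\L)^{\op}.
\]
Now one invokes the known fact for \emph{positive} dg algebras (\cref{prop:silting-for-positive-dg-algebra}, due to Keller--Nicol\'as): $\Dfl(B_\L)$ has a canonical silting object $S_{B_\L}$. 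Transporting it back through $\C\simeq\Dfl(B_\L)^{\op}$ yields a silting object $M\in\C$ with $\L\subseteq\T_M^0=\Filt(\phi(M))$, hence $\phi(M)=\L$ by \cref{rmk:SMC}.

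In short: the paper does not produce the silting object by working inside $\T$ with the ST-pair axioms alone, as you propose; it uses the dg enhancement to \emph{replace} the abstract ST-pair $(\C,\D)$ by the concrete pair $(\Dfl(B_\L),\per B_\L)$ for a positive dg algebra, where the dual silting object is already known to exist. Without this passage, your outline does not construct anything.
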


Remark that if $\D\subseteq\C$ holds, then the above theorem can be reduced to the result of \cite{KN1} (see \cite[Corollary 6.13]{AMY19}), and if $\C\subset\D$ holds, then the above theorem can be proven in a similar manner to \cite{Z23}. The crucial novelty of this work lies in removing the compatibility between $\C$ and $\D$.

Since $(\per A,\Dfd(A))$ is an ST-pair for every non-positive dg $\k$-algebra $A$, the following theorem is a special case of \cref{thm:A}. 

\begin{main}(\cref{thm:ST-for-dg-algebra})\label{thm:B}
Let $A$ be a locally finite non-positive dg $\k$-algebra. Then there exists a bijective correspondence between
\begin{itemize}
\item[(1)] isomorphism classes of basic silting objects of $\per A$,
\item[(2)] bounded co-$t$-structures on $\per A$,
\item[(3)] isomorphism classes of simple-minded collections of $\Dfd(A)$,
\item[(4)] algebraic $t$-structures on $\Dfd(A)$.
\end{itemize}
\end{main}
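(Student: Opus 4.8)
The plan is to read off the four-fold bijection from \cref{thm:A} by exhibiting $(\per A,\Dfd(A))$ as an ST-pair inside a suitable algebraic triangulated category. The natural ambient category is the unbounded derived category $\mathsf{D}(A)$ of the dg algebra $A$; being the derived category of a dg algebra, it is an algebraic triangulated category, so the standing hypothesis of \cref{thm:A} on $\T$ holds with no further work. The entire argument then consists of checking that $\C=\per A$ and $\D=\Dfd(A)$ form an ST-pair inside $\mathsf{D}(A)$ and quoting \cref{thm:A}.

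The ST-pair verification is exactly \cref{eg:non-positive-is-ST-pair}, which I would import rather than redo, but its content is worth recalling. Here $\per A$ is the thick subcategory of $\mathsf{D}(A)$ generated by $A$ and $\Dfd(A)$ is the full subcategory of objects with finite-dimensional total cohomology; both are thick. Non-positivity gives $\Hom_{\mathsf{D}(A)}(A,A[n])=H^n(A)=0$ for $n>0$, so $A$ is a silting object of $\per A$ and $\per A$ carries the standard bounded co-$t$-structure with co-heart $\add A$. Local finiteness makes $H^0(A)$ a finite-dimensional $\k$-algebra, guarantees that the Hom-pairing $\per A\times\Dfd(A)\to\mod\k$ takes finite-dimensional values and is non-degenerate, and exhibits the shifts of the simple $H^0(A)$-modules as a simple-minded collection generating $\Dfd(A)$. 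The point to stress is that \emph{no} containment between $\per A$ and $\Dfd(A)$ is used: when $A$ is neither proper nor homologically smooth, the two subcategories sit side by side inside $\mathsf{D}(A)$ without either containing the other, which is precisely the generality that \cref{thm:A} was designed to cover.

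With the ST-pair in place, \cref{thm:A} immediately yields the bijections between (1) basic silting objects $T$ of $\per A$, (2) bounded co-$t$-structures on $\per A$ — the object $T$ going to the co-$t$-structure with co-heart $\add T$ — (3) simple-minded collections of $\Dfd(A)$, and (4) algebraic $t$-structures on $\Dfd(A)$ — a simple-minded collection going to the $t$-structure it generates, whose length heart has exactly those objects as its simple modules. I would then record that the composite bijection (1)$\to$(4) restricts, on the classes of dg algebras previously treated, to the correspondences of Keller--Nicol{\'a}s \cite{KN2}, Koenig--Yang \cite{KoY14}, Su--Yang \cite{SY19} and Zhang \cite{Z23}, so the present statement unifies all of them.

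All the substance is in \cref{thm:A}; the reduction above is formal, so I do not expect a genuine obstacle. The only matters needing care are definitional: confirming that the notion of algebraic $t$-structure on $\Dfd(A)$ appearing in the statement is literally the one used in the ST-pair formalism, that the $\D$ of the ST-pair is $\Dfd(A)$ and not some larger subcategory, and that local finiteness is exactly the finiteness assumption consumed by \cref{eg:non-positive-is-ST-pair}. Once those are settled — and they are, before the statement — the proof is a single line: $(\per A,\Dfd(A))$ is an ST-pair inside the algebraic triangulated category $\mathsf{D}(A)$, hence \cref{thm:A} applies.
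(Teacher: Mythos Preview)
Your reduction to \cref{thm:A} via \cref{eg:non-positive-is-ST-pair} is exactly the paper's argument; the proof in the paper is the single sentence ``It is a special case of \cref{thm:four-bijection}.''

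There is, however, one concrete slip: the ambient $\T$ cannot be the full unbounded derived category $\D(A)$. With $M=A$ one has $\Hom_{\D(A)}(A,T)=H^0(T)$, which is not finite-length for arbitrary $T\in\D(A)$, so (ST1) fails; and $\bigcup_n\D(A)_A^{\le n}$ consists only of objects with cohomology bounded above, so (ST3) fails too. The example you cite, \cref{eg:non-positive-is-ST-pair}(5), actually places the ST-triple $(\per A,\Dfd(A),A)$ inside $\T=\Dfd^-(A)$, the category of objects whose cohomology is bounded above with each $H^i$ finite-dimensional. This $\T$ is a full triangulated subcategory of $\D(A)$ and hence inherits an algebraic enhancement, so \cref{thm:A} still applies. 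Once you replace $\D(A)$ by $\Dfd^-(A)$, your proof is complete and identical to the paper's.
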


\cref{thm:B} simultaneously generalizes the known results for the case where $A$ is proper or (topologically) homologically smooth with finite-dimensional zeroth-cohomology. 

\medskip
\noindent
{\bf Organization.}

In \cref{section:Preliminaries}, we collect basic definitions and facts about silting objects, simple-minded collections, dg algebras, and ST-pairs. In \cref{section:ST-correspondence-for-ST-pairs}
, we prove the main theorem.

\noindent
{\bf Conventions and notation.}

Throughout this paper, $R$ is a commutative artinian ring, and $\k$ is a field. Let us put $D(-):=\Hom_{R}(-,E)$, where $E$ is an injective hull of the direct sum of the complete set of the isomorphism classes of simple $R$-modules. Unless otherwise stated, $\T,\C$ and $\D$ are $R$-linear triangulated categories with shift functor $\Sigma$. We assume that all subcategories are full. $K_0(-)$ denotes a Grothendieck group. 

\smallskip


\section{Preliminaries}\label{section:Preliminaries}

\subsection{Silting objects and simple-minded collections}

For a subcategory $\U\subseteq\T$ (resp. an object $U\in\T$), let $\add\U$ (resp. $\add U$) denote the smallest additive subcategory of $\T$ which contains $\U$ (resp. $U$) and closed under taking direct summands, and let $\thick\U$ (resp.  $\thick U$) denote the smallest triangulated subcategory which contains $\U$ (resp. $U$) and closed under direct summands. 

\begin{dfn}\cite{AI12}
\ 
\begin{itemize}
\item[(1)] A subcategory $\M\subseteq\T$ is called a \emph{presilting subcategory} if $\Hom_{\T}(\M,\Sigma^{>0}\M)=0$ and $\M=\add\M$.
\item[(2)] An object $M\in\T$ is called a \emph{presilting object} if $\add M$ is a presilting subcategory of $\T$.
\item[(3)] A subcategory $\M\subseteq\T$ is called a \emph{silting subcategory of $\T$} if $\M$ is presilting and $\thick \M=\T$.
\item[(4)] An object $M\in\T$ is called a \emph{silting object of $\T$} if $\add M$ is a silting subcategory of $\T$.
\end{itemize}
\end{dfn}

\begin{dfn}
Let $\T$ be a Krull-Schmidt triangulated category. We define $\silt(\T)$ as the set of isomorphism classes of basic silting objects of $\T$.
\end{dfn}

For a subcategory $\X\subseteq\T$, let $\Filt(\X)$ denote the extension closure of $\X$.

\begin{dfn}\label{def:SMC}\cite{Al09}
A set $\Cal{S}=\{S_i\}_{i\in I}$ of objects of $\T$ is called a \emph{pre-simple-minded collection} if 
\begin{itemize}
\item[(1)] $\Hom_{\T}(\Cal{S},\Sigma^{<0}\Cal{S})=0$,
\item[(2)] $\Cal{S}$ is a semibrick, that is, for every $i,j\in I$,
\begin{align*}
\Hom_{\T}(S_i,S_j)=\begin{cases} \text{division ring} & \text{if } i=j,\\
0 & \text{if } i\neq j.\end{cases}
\end{align*}
\end{itemize}
We put $\H_\Cal{S}:=\Filt(\Cal{S})$. A pre-simple-minded collection $\Cal{S}$ of $\T$ is called a \emph{simple-minded collection of $\T$} if 
\begin{itemize}
\item[(3)] $\thick\Cal{S}=\T$.
\end{itemize}
\end{dfn}

\begin{dfn}
We define $\SMC(\T)$ as the set of isomorphism classes of simple-minded collections of $\T$. 
\end{dfn}

\begin{dfn}
Let $\Y_1$ and $\Y_2$ be subcategories of $\T$. Let us define $\Y_1\ast\Y_2$ as the subcategory of $\T$ consisting of objects $Z$ such that there exists an exact triangle
\begin{align*}
Y_1\to Z\to Y_2\to \Sigma Y_1,
\end{align*}
where $Y_1\in\Y_1$ and $Y_2\in\Y_2$.
\end{dfn}

\begin{dfn}
For a presilting subcategory $\M$ of $\T$, a pre-simple-minded collection $\Cal{S}$ of $\T$, a thick subcategory $\U$ of $\T$ and an integer $n$, we define full subcategories of $\T$ as follows:
\begin{eqnarray*}
\U_{\M}^{\le n}\hspace{-5pt}&:=\hspace{-5pt}&\{ X\in \U \mid \Hom_{\T}(\M,\Sigma^{>n}X)=0 \},\\
\U_{\M}^{\ge n}\hspace{-5pt}&:=\hspace{-5pt}&\{ X\in \U \mid \Hom_{\T}(\M,\Sigma^{<n}X)=0 \}, \\
\U_{\M}^{n}\hspace{-5pt}&:=\hspace{-5pt}&\U_{\M}^{\le n}\cap \U_{\M}^{\ge n}, \\
\U^{\Cal{S}}_{\le n}\hspace{-5pt}&:=\hspace{-5pt}&\{ X\in \U \mid \Hom_{\T}(\Sigma^{>n}X,\Cal{S})=0 \}, \\
\U^{\Cal{S}}_{\ge n}\hspace{-5pt}&:=\hspace{-5pt}&\{ X\in \U \mid \Hom_{\T}(\Sigma^{<n}X,\Cal{S})=0 \},\\
\U^{\Cal{S}}_{n}\hspace{-5pt}&:=\hspace{-5pt}&\U^{\Cal{S}}_{\ge n}\cap \U^{\Cal{S}}_{\le n}, 
\end{eqnarray*}
For an object $M\in\T$, we put $\U_M^{\le n}:=\U_{\add M}^{\le n}$. We define $\U_M^{\ge n}$ and $\U_M^n$ in the same way.
\end{dfn}

\begin{dfn}
For a presilting subcategory $\M$ of $\T$, a pre-simple-minded collection $\Cal{S}$ of $\T$, and an integer $n$, we define full subcategories of $\T$ as follows:
\begin{align*}
\T_{\M,\le n}&:=\T_{\M}^{\le n},\\
\T_{\M,\ge n}&:=\bigcup\nolimits_{l\ge -n}\Sigma^l\M\ast\Sigma^{l-1}\M\ast\cdots\ast\Sigma^{-n+1}\M\ast\Sigma^{-n}\M, \\
\T^{\Cal{S},\le n}&:=\T^{\Cal{S}}_{\le n}, \\
\T^{\Cal{S},\ge n}&:=\bigcup_{l\ge n}\Sigma^{-n}\H_\Cal{S}\ast\Sigma^{-n-1}\H_\Cal{S}\ast\cdots\Sigma^{-l+1}\H_\Cal{S}\ast\Sigma^{-l}\H_\Cal{S}.
\end{align*}
For an object $M\in\T$, we define $\T_{M,\le n}:=\T_{\add M,\le n}$. We define $\T_{M,\le n}$ and $\T_{M,\ge n}$ in the same way.
\end{dfn}

\begin{dfn}
Let $(\T_{\ge 0},\T_{\le 0})$ be a pair of subcategories of $\T$. Let us put $\T_{\ge n}=\Sigma^{-n}\T_{\ge 0}$, and $\T_{\le n}=\Sigma^{-n}\T_{\le 0}$. The pair $(\T_{\ge 0},\T_{\le 0})$ is called a \emph{co-t-structure} on $\T$ if 
\begin{itemize}
\item[(0)] $\T_{\ge 0}$ and $\T_{\le 0}$ are additive subcategories of $\T$ that are closed under direct summands,
\item[(1)] $\T_{\ge 1}\subseteq\T_{\ge 0}$ and $\T_{\le 0}\subseteq\T_{\le 1}$,
\item[(2)] $\Hom_{\T}(\T_{\ge 1},\T_{\le 0})=0$,
\item[(3)] $\T_{\ge 1}\ast\T_{\le 0}=\T$.
\end{itemize}
The subcategory $\T_0:=\T_{\geq0}\cap\T_{\leq0}$ is called \emph{co-heart} of the co-$t$-structure $(\T_{\geq0},\T_{\leq0})$.
\end{dfn}

\begin{dfn}
A co-t-structure $(\T_{\geq0},\T_{\leq0})$ is called \emph{bounded} if 
\begin{align*}
\bigcup_{n\in\ZZ}\T_{\geq n}=\T=\bigcup_{n\in\ZZ}\T_{\leq n},
\end{align*}
and called \emph{non-degenerate} if
\begin{align*}
\bigcap_{n\in\ZZ}\T_{\geq n}=0=\bigcap_{n\in\ZZ}\T_{\leq n}.
\end{align*}
\end{dfn}

For a given silting object, we can construct a bounded co-$t$-structure by the following proposition.
\begin{prop}\cite[Proposition 2.23]{AI12}\cite[Proposition 2.8]{IYa18}\label{cor:silting-and-co-t-structure1}
Let $\M$ be a silting subcategory of $\T$. Then we have 
\begin{align*}
\T_{\M,\le 0}=\bigcup_{l\ge 0}\M\ast\Sigma\M\ast\cdots\ast\Sigma^{l-1}\M\ast\Sigma^l\M,
\end{align*}
and $(\T_{\M,\geq0},\T_{\M,\leq0})$ is a co-$t$-structure with a co-heart $\M$.
\end{prop}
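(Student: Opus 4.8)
The plan is to prove the two assertions separately: first the explicit description of $\T_{\M,\le 0}$ as the union of iterated extensions $\M\ast\Sigma\M\ast\cdots\ast\Sigma^l\M$, and then, using this description together with the definition of $\T_{\M,\ge 0}$, verify the four axioms (0)--(3) of a co-$t$-structure and identify the co-heart. For the first part, the inclusion $\bigcup_{l\ge 0}\M\ast\Sigma\M\ast\cdots\ast\Sigma^l\M\subseteq\T_{\M,\le 0}$ is the easy direction: every object of $\M$ lies in $\T_{\M,\le 0}=\{X\mid\Hom_\T(\M,\Sigma^{>0}X)=0\}$ because $\M$ is presilting, each $\Sigma^i\M$ for $i\ge 0$ lies in $\T_{\M,\le 0}$ for the same reason after shifting, and $\T_{\M,\le 0}$ is closed under extensions (apply $\Hom_\T(\M,\Sigma^{>0}(-))$ to a triangle and use the long exact sequence). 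The reverse inclusion is the substantive point and I would prove it by a co-$t$-structure-style truncation argument: given $X\in\T_{\M,\le 0}$, since $\thick\M=\T$ one can write $X$ as a finite iterated extension of shifts $\Sigma^{a}\M$ with $a$ ranging over some finite interval $[-p,q]$; then one pushes the negative shifts (those $\Sigma^{a}\M$ with $a<0$, i.e.\ $\Sigma^{>0}$-direction summands) past $X$ using the vanishing $\Hom_\T(\M,\Sigma^{>0}X)=0$ and the presilting condition, decreasing $p$ by an octahedral/induction argument until only non-negative shifts remain.

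For the second part I would argue as follows. Axiom (0): $\T_{\M,\ge 0}$ is closed under extensions by its very definition as a union of $\ast$-products, and closed under summands because $\add$ of the building blocks is, and a standard argument (or appeal to the literature) shows $\ast$-products of additive summand-closed subcategories stay summand-closed in this setting; $\T_{\M,\le 0}$ is additive and summand-closed by the explicit description just proved (or directly from its definition via a $\Hom$-vanishing condition). Axiom (1): $\T_{\M,\ge 1}\subseteq\T_{\M,\ge 0}$ is immediate from the definition (the union defining $\T_{\M,\ge 1}$ has fewer, longer terms, each of which appears in the union for $\T_{\M,\ge 0}$ after one extra $\Sigma^0\M$), and $\T_{\M,\le 0}\subseteq\T_{\M,\le 1}$ is clear since $\Sigma^{>1}\subseteq\Sigma^{>0}$. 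Axiom (2), $\Hom_\T(\T_{\M,\ge 1},\T_{\M,\le 0})=0$: reduce to $\Hom_\T(\Sigma^{a}\M,Y)=0$ for $a\ge 1$ and $Y\in\T_{\M,\le 0}$ by dévissage on the $\ast$-filtration of the source, which is exactly $\Hom_\T(\M,\Sigma^{-a}Y)=\Hom_\T(\M,\Sigma^{<0}Y)=0$ — note this needs $\Hom_\T(\M,\Sigma^{\le 0}\M)$-type information; more precisely $Y\in\T_{\M,\le 0}$ gives vanishing only in $\Sigma^{>0}$, so I would instead run the dévissage on the target using the explicit form $\T_{\M,\le 0}=\bigcup\M\ast\cdots\ast\Sigma^l\M$, reducing to $\Hom_\T(\Sigma^{a}\M,\Sigma^{b}\M)=0$ for $a\ge 1$, $b\ge 0$, i.e.\ $\Hom_\T(\M,\Sigma^{b-a}\M)=0$ with $b-a$ ranging over... which is \emph{not} automatically negative. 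The correct reduction is: dévissage the source, reducing to $\Hom_\T(\Sigma^a\M,Y)=\Hom_\T(\M,\Sigma^{-a}Y)$ with $-a\le -1<0$, and then $Y\in\T_{\M,\le 0}$ does \emph{not} directly give this; so one uses instead that $Y\in\T_{\M,\le 0}$ means $\Hom_\T(\M,\Sigma^{>0}Y)=0$ and combines with the fact that the building blocks $\Sigma^a\M$ of $\T_{\M,\ge1}$ satisfy $a\ge1$ so $\Sigma^{-a}Y$ lies in the "$\Sigma^{\ge1}Y$" range — i.e.\ apply the definition to $\Sigma^{a}X$ in place of $X$. Axiom (3), $\T_{\M,\ge 1}\ast\T_{\M,\le 0}=\T$: this is the existence of truncation triangles, which follows from $\thick\M=\T$ and the first part by the same kind of splitting argument used there (write an arbitrary object as an iterated extension of shifts of $\M$, group the shifts with $a\ge 1$ into the left term and those with $a\le 0$ into the right term, using octahedra to reorganize the filtration). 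Finally $\T_0=\T_{\ge0}\cap\T_{\le0}$: the $\subseteq$ direction unwinds to objects that are iterated extensions of $\Sigma^a\M$ with $a\ge 0$ and simultaneously satisfy $\Hom_\T(\M,\Sigma^{>0}(-))=0$; pushing as before forces the extension to collapse to $a=0$, giving $\M$; the $\supseteq$ direction is the observation $\M\subseteq\T_{\ge0}$ (take $l=-n=0$) and $\M\subseteq\T_{\le 0}$ (presilting).

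The main obstacle is the reverse inclusion $\T_{\M,\le 0}\subseteq\bigcup_{l\ge 0}\M\ast\cdots\ast\Sigma^l\M$ (equivalently, axiom (3)): it requires turning the abstract hypothesis $\thick\M=\T$ into a concrete bounded $\ast$-filtration by shifts of $\M$ and then \emph{moving the wrong-sign pieces out} using the $\Hom$-vanishing, which is a delicate octahedral bookkeeping argument. Everything else is formal manipulation of $\ast$-products and long exact sequences. Since this is \cite[Proposition 2.23]{AI12} and \cite[Proposition 2.8]{IYa18}, I would in fact cite those references for the truncation/reverse-inclusion step and only spell out the formal verifications, but the sketch above indicates how the self-contained argument runs.
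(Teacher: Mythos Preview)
The paper does not prove this proposition: it is quoted from \cite[Proposition~2.23]{AI12} and \cite[Proposition~2.8]{IYa18} without argument. Your closing remark --- that you would cite those references for the substantive steps --- is therefore exactly what the paper does.

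Your sketch has the right shape and follows the approach of \cite{AI12}: write an arbitrary object of $\thick\M$ as a finite iterated $\ast$-product of shifts of $\M$, then use the presilting vanishing together with octahedra to separate the pieces into the two required halves. This yields both the explicit description of $\T_{\M,\le 0}$ and the truncation triangle (axiom~(3)); the remaining axioms are formal. Your axiom~(2) paragraph, however, is visibly exploratory --- you cycle through three attempted reductions --- and ends with an indexing claim (``the building blocks $\Sigma^a\M$ of $\T_{\M,\ge 1}$ satisfy $a\ge 1$'') that does not match the paper's definition $\T_{\M,\ge n}=\bigcup_{l\ge -n}\Sigma^l\M\ast\cdots\ast\Sigma^{-n}\M$, under which the shifts in $\T_{\M,\ge 1}$ go down to $\Sigma^{-1}\M$. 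This is a bookkeeping confusion rather than a conceptual gap, and if you were writing out a self-contained proof you would need to fix the conventions carefully (the paper's own axiom~(1) for co-$t$-structures and its definition of $\T_{\M,\ge n}$ appear to have a sign mismatch, so consult the cited sources directly). Since the paper merely cites the result, no polished argument is required here.
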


The following proposition ensures that if a Krull-Schmidt triangulated category has a silting object, then every silting subcategory has an additive generator, and the number of indecomposable direct summands of every basic silting object is constant.
\begin{prop}\cite[Proposition 2.20 and Theorem 2.27]{AI12}
\ 
\begin{itemize}
\item[(1)] If $\T$ has a silting object, then every silting subcategory has an additive generator.
\item[(2)] Let $\T$ be a Krull-Schmidt triangulated category with a silting subcategory $\M$. Then $K_0(\T)\simeq\ZZ^{\ind \M}$.
\end{itemize}
\end{prop}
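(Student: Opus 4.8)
\emph{Plan.} I would prove the two assertions separately, both by exploiting the bounded co-$t$-structure attached to a silting (sub)category by \cref{cor:silting-and-co-t-structure1}: assertion (1) via the orthogonality built into that co-$t$-structure, assertion (2) via its weight-tower bookkeeping. For (1), let $M$ be a silting object of $\T$ and let $\N$ be an arbitrary silting subcategory. Since $\thick\N=\T$ contains $M$, and since $\thick\N$ consists of the direct summands of objects obtained by finitely many iterated extensions from finite direct sums of shifts of objects of $\N$, the object $M$ is a summand of such an object involving only finitely many $N_1,\dots,N_s\in\N$. Set $N:=N_1\oplus\cdots\oplus N_s$; as $\N$ is additive, $N\in\N$, so $N$ is presilting, and $M\in\thick N$ forces $\T=\thick M\subseteq\thick N\subseteq\T$, so $N$ is in fact a silting object. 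Applying \cref{cor:silting-and-co-t-structure1} to $\add N$, the pair $(\T_{N,\geq 0},\T_{N,\leq 0})$ is a co-$t$-structure with co-heart $\add N$. For any $N'\in\N$ the groups $\Hom_\T(N,\Sigma^{>0}N')$ and $\Hom_\T(N',\Sigma^{>0}N)$ are both contained in $\Hom_\T(\N,\Sigma^{>0}\N)=0$; the first vanishing gives $N'\in\T_{N,\leq 0}$ and the second gives $N'\in\T_{N,\geq 0}$ (the co-aisle $\T_{N,\geq 0}$ being $\{X\mid\Hom_\T(X,\Sigma^{>0}N)=0\}$). Hence $N'\in\T_{N,\geq 0}\cap\T_{N,\leq 0}=\add N$, so $\N=\add N$ and $N$ is an additive generator of $\N$.

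For (2), define $\phi\colon\ZZ^{\ind\M}\to K_0(\T)$ on the canonical basis by $N\mapsto[N]$ for $N\in\ind\M$; I would construct a two-sided inverse. By \cref{cor:silting-and-co-t-structure1}, $\M$ is the co-heart of a bounded co-$t$-structure on $\T$, so each object $X$ has a finite weight tower $0=X_{a-1}\to X_a\to\cdots\to X_b=X$ with $\cone(X_{i-1}\to X_i)\in\Sigma^{-i}\M$; write that cone as $\Sigma^{-i}(w_iX)$ with $w_iX\in\M$. The weight-tower bookkeeping carried out in \cite{AI12} shows that the assignment $X\mapsto\sum_i(-1)^i[w_iX]$ descends to a well-defined group homomorphism $\psi\colon K_0(\T)\to K_0(\Kb(\M))=\ZZ^{\ind\M}$, where the identification on the right holds because $\M$ is Krull--Schmidt. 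For $N\in\ind\M$ the weight tower is trivial, so $\psi[N]$ is the basis vector of $N$; hence $\psi\phi=\mathrm{id}$, which makes $\phi$ injective. On the other hand, telescoping the triangles $X_{i-1}\to X_i\to\Sigma^{-i}(w_iX)\to\Sigma X_{i-1}$ in $K_0(\T)$ gives $[X]=\sum_i(-1)^i[w_iX]$, i.e.\ $\phi\psi=\mathrm{id}$; thus $\phi$ is surjective, hence an isomorphism.

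The step I expect to be the main obstacle is, in (2), the verification that $X\mapsto\sum_i(-1)^i[w_iX]$ is independent of the chosen weight tower and additive along exact triangles, so that $\psi$ is indeed a well-defined homomorphism; this is the heart of the weight-structure formalism. Everything else — the surjectivity and injectivity of $\phi$ once $\psi$ is available, the identification $K_0(\Kb(\M))=\ZZ^{\ind\M}$, and all of part (1) — is formal given \cref{cor:silting-and-co-t-structure1}.
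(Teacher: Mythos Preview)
The paper does not give its own proof of this proposition: it is quoted verbatim from \cite{AI12} (Proposition~2.20 and Theorem~2.27) and stated without argument, so there is nothing in the paper to compare your proposal against.

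That said, your outline is essentially the standard route and is correct. A small comment on part~(1): your parenthetical identification $\T_{N,\ge0}=\{X\mid\Hom_\T(X,\Sigma^{>0}N)=0\}$ is not the paper's \emph{definition} of $\T_{N,\ge0}$ (which is via iterated extensions), but it does follow once you know $(\T_{N,\ge0},\T_{N,\le0})$ is a co-$t$-structure: for any co-$t$-structure one has $\T_{\ge0}={}^{\perp}(\T_{\le-1})$ because both aisles are closed under summands and $\T=\T_{\ge1}\ast\T_{\le0}$, and here $\T_{N,\le-1}$ is the extension closure of $\Sigma^{>0}N$. You might make that step explicit. For part~(2), you have correctly located the only nontrivial point: independence of the chosen weight tower and additivity along triangles for $X\mapsto\sum_i(-1)^i[w_iX]$. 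This is exactly Bondarko's computation that $K_0(\T)\simeq K_0^{\mathrm{split}}(\M)$ for a bounded weight structure with heart $\M$, which is also how \cite{AI12} proceeds; the identification $K_0^{\mathrm{split}}(\M)=\ZZ^{\ind\M}$ then uses only Krull--Schmidt.
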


\begin{prop}\cite[Corollary 5.8]{MSSS13}\label{cor:silting-and-co-t-structure2}
Let $\T$ be a Krull-Schmidt triangulated category that has a silting object. Then the map $M\mapsto(\T_{M,\geq0},\T_{M,\leq0})$ induces bijection from $\silt(\T)$ to the set of bounded co-$t$-structures on $\T$. The inverse of the above map attaches $(\T_{\geq0},\T_{\leq0})$ to the basic additive generator of $\T_0$.
\end{prop}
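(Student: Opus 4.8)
The plan is to construct an explicit inverse to the map $\Phi$ given by $\Phi(M):=(\T_{M,\ge0},\T_{M,\le0})$ for $M\in\silt(\T)$. First I would check that $\Phi$ is well defined and lands among the \emph{bounded} co-$t$-structures. By \cref{cor:silting-and-co-t-structure1} the pair $(\T_{M,\ge0},\T_{M,\le0})$ is a co-$t$-structure with co-heart $\add M$, so only boundedness is at issue; for this one notes that $\bigcup_{n\in\ZZ}\T_{M,\le n}$ and $\bigcup_{n\in\ZZ}\T_{M,\ge n}$ are thick subcategories of $\T$ containing $M$ --- closure under $\Sigma^{\pm1}$ is clear, and closure under summands and cones is a short check using the co-$t$-structure axioms together with the long exact sequences obtained by applying $\Hom_{\T}(M,-)$ --- so both equal $\thick M=\T$, and hence $\Phi(M)$ is bounded. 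Moreover $\Phi$ is injective, since $\Phi(M)$ determines its co-heart $\add M$, hence the basic object $M$.

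Next I would construct the candidate inverse $\Psi$, sending a bounded co-$t$-structure $(\T_{\ge0},\T_{\le0})$ to the basic additive generator of its co-heart $\M:=\T_0$. The key point is that $\M$ is a silting subcategory of $\T$. It is closed under direct summands, being an intersection of two such subcategories, and it is presilting because
\[
\Hom_{\T}(\M,\Sigma^k\M)\subseteq\Hom_{\T}(\T_{\ge0},\Sigma^k\T_{\le0})=\Hom_{\T}(\T_{\ge k},\T_{\le0})=0\qquad(k\ge1),
\]
using $\T_{\ge k}\subseteq\T_{\ge1}$ and axiom~(2). To obtain $\thick\M=\T$ I would use boundedness: any $X\in\T$ lies in $\T_{\ge a}\cap\T_{\le b}$ for some $a\le b$, and iterated truncation against the co-$t$-structure (a weight Postnikov tower) places $X$ in a finite $\ast$-product of shifts $\Sigma^{-j}\M$ with $a\le j\le b$; as $\thick\M$ is closed under extensions and contains each $\Sigma^{-j}\M$, it contains $X$. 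Then \cite[Proposition~2.20 and Theorem~2.27]{AI12} applies and yields a basic additive generator $M_0$ of $\M$, which is a basic silting object; I set $\Psi(\T_{\ge0},\T_{\le0}):=M_0$.

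It remains to verify $\Psi\circ\Phi=\mathrm{id}$ and $\Phi\circ\Psi=\mathrm{id}$. The first is immediate: the co-heart of $\Phi(M)$ is $\add M$ by \cref{cor:silting-and-co-t-structure1}, whose basic additive generator is $M$ when $M$ is basic. For the second, fix a bounded co-$t$-structure $(\T_{\ge0},\T_{\le0})$ with co-heart $\M=\add M_0$; since a co-$t$-structure is recovered from $\T_{\le0}$ via $\T_{\ge1}=\{X\in\T\mid\Hom_{\T}(X,\T_{\le0})=0\}$ (a short argument from the truncation triangle at level~$1$ and axiom~(2)), it suffices to show $\T_{M_0,\le0}=\T_{\le0}$. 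For ``$\subseteq$'', \cref{cor:silting-and-co-t-structure1} gives $\T_{M_0,\le0}=\bigcup_{l\ge0}\M\ast\Sigma\M\ast\cdots\ast\Sigma^l\M$, and each $\Sigma^j\M$ with $j\ge0$ lies in $\T_{\le0}$, which is extension-closed. For ``$\supseteq$'', if $X\in\T_{\le0}$ and $k\ge1$ then $\Sigma^{k-1}X\in\T_{\le0}$ and $\Sigma^{-1}M_0\in\Sigma^{-1}\T_{\ge0}=\T_{\ge1}$, so $\Hom_{\T}(M_0,\Sigma^kX)=\Hom_{\T}(\Sigma^{-1}M_0,\Sigma^{k-1}X)=0$; thus $X\in\T_{M_0,\le0}$. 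This also yields surjectivity of $\Phi$ onto the bounded co-$t$-structures.

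The step I expect to be the main obstacle is the claim $\thick\M=\T$ for the co-heart $\M$ of a bounded co-$t$-structure, i.e.\ the weight Postnikov tower / iterated-truncation argument: it needs a careful induction on the width $b-a$ and attention to which shifted co-hearts appear at each stage, whereas the remaining ingredients are formal consequences of the co-$t$-structure axioms. Throughout, the Krull--Schmidt hypothesis on $\T$ is what makes ``basic additive generator'' meaningful and, via \cite{AI12}, supplies the silting object $M_0$ from the silting subcategory $\M$.
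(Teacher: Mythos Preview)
The paper does not give its own proof of this proposition: it is stated with a citation to \cite[Corollary~5.8]{MSSS13} and used as a black box, so there is no argument in the paper to compare against. Your proposal is a correct reconstruction of the standard proof of this result (essentially the one in \cite{MSSS13} and \cite{AI12}): you verify that $\Phi$ lands in bounded co-$t$-structures, build the inverse via the co-heart, and check both composites. The only places where your write-up is slightly loose are (i) the boundedness of $(\T_{M,\ge0},\T_{M,\le0})$ on the ``$\ge$'' side, which is cleaner to deduce directly from the description of both aisles as iterated $\ast$-products of shifts of $\M$ (\cref{cor:silting-and-co-t-structure1}) together with $\T=\T_{M,\ge1}\ast\T_{M,\le0}$, rather than via a thickness argument; and (ii) the recovery $\T_{\ge1}={}^{\perp}\T_{\le0}$, which is correct but relies on closure of $\T_{\ge1}$ under direct summands (axiom~(0)), a point worth making explicit. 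The weight Postnikov tower step you flag as the main obstacle is indeed the substantive part, and your outline of the induction on $b-a$ is the right way to carry it out.
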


\begin{dfn}
Let $(\T^{\le 0},\T^{\ge 0})$ be a pair of full subcategories of $\T$. Let us put $\T^{\le n}=\Sigma^{-n}\T^{\le 0}$, and $\T^{\ge n}=\Sigma^{-n}\T^{\ge 0}$. The pair $(\T^{\le 0},\T^{\ge 0})$ is called a \emph{t-structure} on $\T$ if 
\begin{itemize}
\item[(1)] $\T^{\le 0}\subseteq\T^{\le 1}$ and $\T^{\ge 1}\subseteq\T^{\ge 0}$,
\item[(2)] $\Hom_{\T}(\T^{\le 0},\T^{\ge 0})=0$,
\item[(3)] $\T^{\le 0}\ast\T^{\ge 1}=\T$.
\end{itemize}
The subcategory $\T^0:=\T^{\le 0}\cap\T^{\ge 0}$ is called the \emph{heart} of the $t$-structure $(\T^{\le 0},\T^{\ge 0})$.
\end{dfn}

\begin{dfn}
A $t$-structure $(\T^{\le 0},\T^{\ge 0})$ is called \emph{bounded} if 
\begin{align*}
\bigcup_{n\in\ZZ}\T^{\le n}=\T=\bigcup_{n\in\ZZ}\T^{\ge n}.
\end{align*}
and called \emph{non-degenerate} if
\begin{align*}
\bigcap_{n\in\ZZ}\T^{\le n}=0=\bigcap_{n\in\ZZ}\T^{\ge n}.
\end{align*}
A bounded $t$-structure is called \emph{algebraic} if its heart is a length abelian category.
\end{dfn}

\begin{dfn}
Let $\H$ be a subcategory of $\T$.
\begin{itemize}
\item[(1)] $\H$ is called a \emph{bounded heart} if $\H$ is a heart of some bounded $t$-structure on $\T$.
\item[(2)] $\H$ is called a \emph{length heart} if $\H$ is a heart of some algebraic $t$-structure on $\T$.
\end{itemize}
\end{dfn}

The following two can be easily deduced from \cite[Remarque 1.3.14]{B82}.
\begin{lem}
Let $\Cal{S}$ be a simple-minded collection of $\T$. Then we have
\begin{align*}
\T^{\Cal{S},\le 0}=\bigcup_{l\ge 0}\Sigma^l\H_\Cal{S}\ast\Sigma^{l-1}\H_\Cal{S}\ast\cdots\ast\Sigma\H_\Cal{S}\ast\H_\Cal{S},
\end{align*}
and $(\T^{\Cal{S},\le 0},\T^{\Cal{S},\ge 0})$ is a $t$-structure on $\T$ with heart $\H_\Cal{S}$.
\end{lem}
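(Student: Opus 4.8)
Throughout write $\H_\Cal{S}=\Filt(\Cal{S})$ as in the statement, and set $\mathcal{X}:=\T^{\Cal{S},\le 0}=\{X\in\T\mid\Hom_{\T}(X,\Sigma^{<0}\Cal{S})=0\}$ and $\mathcal{X}^{\perp}:=\{W\in\T\mid\Hom_{\T}(\mathcal{X},W)=0\}$. A preliminary dévissage (using that $\{Y\mid\Hom_{\T}(X,Y)=0\}$ and $\{X\mid\Hom_{\T}(X,Y)=0\}$ are extension--closed for fixed $X$, resp.\ $Y$) upgrades conditions (1)--(2) of \cref{def:SMC} to $\Hom_{\T}(\Sigma^a\H_\Cal{S},\Sigma^b\H_\Cal{S})=0$ for all $a\neq b$; in particular $\Cal{S}\subseteq\mathcal{X}$, and since $\Sigma^{-1}\Cal{S}\subseteq\Sigma^{<0}\Cal{S}$ the definition of $\mathcal{X}$ also gives $\Cal{S}\subseteq\Sigma\mathcal{X}^{\perp}$; hence, both $\mathcal{X}$ and $\Sigma\mathcal{X}^{\perp}$ being extension--closed, $\H_\Cal{S}\subseteq\mathcal{X}\cap\Sigma\mathcal{X}^{\perp}$. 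The plan is then to apply the standard criterion \cite[Remarque 1.3.14]{B82}: the subcategory $\mathcal{X}$ is tautologically closed under $\Sigma$, under extensions, and under direct summands (it is cut out by a $\Hom$-vanishing), so it suffices to show that every $X\in\T$ fits into a triangle $X'\to X\to X''\to\Sigma X'$ with $X'\in\mathcal{X}$ and $X''\in\mathcal{X}^{\perp}$; granting this, $(\mathcal{X},\Sigma\mathcal{X}^{\perp})$ is a $t$-structure with $\T^{\le 0}=\mathcal{X}=\T^{\Cal{S},\le 0}$ and $\T^{\ge 1}=\mathcal{X}^{\perp}$.

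I would prove this decomposition by induction on the construction of $\T=\thick\Cal{S}$, in the stronger form: for every $X\in\T$ and every $n\in\ZZ$ there is a triangle $X_{\le n}\to X\to X_{>n}\to\Sigma X_{\le n}$ with $X_{\le n}\in\Sigma^{-n}\mathcal{X}$ and $X_{>n}\in\Sigma^{-n}\mathcal{X}^{\perp}$. For a shift $X=\Sigma^k S_i$ a trivial triangle already works: $\Sigma^k S_i\in\Sigma^{-n}\mathcal{X}$ whenever $n+k\ge 0$ (because $\Cal{S}\subseteq\mathcal{X}$ and $\Sigma\mathcal{X}\subseteq\mathcal{X}$), while if $n+k<0$ then $\Sigma^{n+k}S_i\in\Sigma^{<0}\Cal{S}\subseteq\mathcal{X}^{\perp}$, so $\Sigma^k S_i\in\Sigma^{-n}\mathcal{X}^{\perp}$. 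The statement is visibly stable under $\Sigma^{\pm 1}$ (reindex $n$) and under direct summands ($\Sigma^{-n}\mathcal{X}$ and $\Sigma^{-n}\mathcal{X}^{\perp}$ are summand--closed). Stability under cones is the usual octahedral gluing: from $X\to Y\to Z\to\Sigma X$ and level--$n$ truncation triangles for $X$ and $Z$ one assembles one for $Y$, with $Y_{\le n}$ in the extension closure of $X_{\le n}$ and $Z_{\le n}$ and $Y_{>n}$ in the extension closure of $X_{>n}$ and $Z_{>n}$ — which lands in $\Sigma^{-n}\mathcal{X}$, resp.\ $\Sigma^{-n}\mathcal{X}^{\perp}$, as both are extension--closed — the construction using only the vanishing $\Hom_{\T}(\Sigma^{-n}\mathcal{X},\Sigma^{-n}\mathcal{X}^{\perp})=\Hom_{\T}(\mathcal{X},\mathcal{X}^{\perp})=0$. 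Thus the objects satisfying the statement form a thick subcategory of $\T$ containing $\Cal{S}$, hence all of $\T$; the case $n=0$ is the required decomposition, and the $t$-structure is automatically bounded (the full subcategories $\bigcup_n\T^{\le n}$ and $\bigcup_n\T^{\ge n}$ are thick and contain $\Cal{S}$).

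It remains to pin down the heart $\H_t:=\mathcal{X}\cap\Sigma\mathcal{X}^{\perp}$. We have already observed $\H_\Cal{S}\subseteq\H_t$. For the reverse inclusion I would invoke the standard dévissage for simple--minded collections (cf.\ \cite{Al09,KoY14,AMY19}): the negativity condition together with $\thick\Cal{S}=\T$ forces $\H_t$ to be a length abelian category whose simple objects are exactly the $S_i$, so $\H_t=\Filt(\Cal{S})=\H_\Cal{S}$. Since $(\T^{\le 0},\T^{\ge 0})$ is now a bounded $t$-structure with heart $\H_\Cal{S}$, reading off the cohomological (Postnikov) filtration of an object of $\T^{\le 0}$ yields $\T^{\Cal{S},\le 0}=\T^{\le 0}=\bigcup_{l\ge 0}\Sigma^l\H_\Cal{S}\ast\cdots\ast\Sigma\H_\Cal{S}\ast\H_\Cal{S}$, which is the displayed identity; dually $\T^{\ge 0}=\bigcup_{l\ge 0}\H_\Cal{S}\ast\Sigma^{-1}\H_\Cal{S}\ast\cdots\ast\Sigma^{-l}\H_\Cal{S}=\T^{\Cal{S},\ge 0}$, so the pair appearing in the lemma is exactly this $t$-structure and its heart is $\H_\Cal{S}$.

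The step I expect to be the main obstacle is the decomposition — the existence of the truncation triangles. It is the only point where the hypothesis $\thick\Cal{S}=\T$ enters, and the cone step, with its octahedral bookkeeping, carries all the weight. A secondary delicate point is the identification $\H_t=\H_\Cal{S}$: although it is the classical dévissage for simple--minded collections, it genuinely uses both the negativity condition and $\thick\Cal{S}=\T$, and should not be treated as automatic.
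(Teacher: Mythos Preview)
The paper does not prove this lemma at all: it records it as ``easily deduced from \cite[Remarque 1.3.14]{B82}'' and moves on. Your sketch is thus an attempt to unpack exactly the reference the paper invokes, so there is no difference of approach to compare. Two points deserve correction, though.

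First, a misstatement: the d\'evissage from conditions (1)--(2) of \cref{def:SMC} gives $\Hom_{\T}(\Sigma^a\H_\Cal{S},\Sigma^b\H_\Cal{S})=0$ only for $a>b$, not for all $a\neq b$. (Positive Ext between simples is allowed: think of two simples over $\k A_2$.) This slip is harmless for what follows, since the only consequence you actually use is $\H_\Cal{S}\subseteq\mathcal X\cap\Sigma\mathcal X^{\perp}$, which needs only the one-sided vanishing.

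Second, and more substantively, the cone step does not go through with only $\Hom_{\T}(\mathcal X,\mathcal X^{\perp})=0$. Concretely: given $X\to Y\to Z\to\Sigma X$ and truncations $X_{\le 0}\to X\to X_{>0}$, $Z_{\le 0}\to Z\to Z_{>0}$, the natural octahedra produce an object $Q$ sitting in a triangle $X_{>0}\to Q\to Z_{\le 0}\to\Sigma X_{>0}$, and to finish you would need $Q\in\mathcal X^{\perp}$ or the connecting map $Z_{\le 0}\to\Sigma X_{>0}$ to vanish --- but $\Sigma X_{>0}\in\Sigma\mathcal X^{\perp}\supsetneq\mathcal X^{\perp}$, and $\Hom(\mathcal X,\Sigma\mathcal X^{\perp})$ need not be zero. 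Having truncations at \emph{all} levels does not rescue this, because shifting the cut for $X$ to make $\Sigma X_{>n}\in\mathcal X^{\perp}$ forces $X_{\le n}\notin\mathcal X$. What makes the argument in \cite{B82} work is the extra input that $\H_\Cal{S}$ is \emph{admissible abelian} (every morphism in $\H_\Cal{S}$ has its cone decomposing back into $\Sigma\H_\Cal{S}\ast\H_\Cal{S}$); for a simple-minded collection this follows from the semibrick condition by induction on filtration length, and it is precisely this that lets one reorder $\Sigma^i\H_\Cal{S}\ast\Sigma^j\H_\Cal{S}$ for $i<j$. You flag the cone step as the main obstacle --- correctly --- but the phrase ``usual octahedral gluing \dots\ using only the vanishing $\Hom_{\T}(\mathcal X,\mathcal X^{\perp})=0$'' understates what is needed.
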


\begin{cor}\label{cor:SMC-and-t-structure}
The map $\Cal{S}\mapsto(\T^{\Cal{S},\le 0},\T^{\Cal{S},\ge 0})$ induces a bijection from $\SMC(\T)$ to the set of algebraic $t$-structures on $\T$. The map $(\T^{\le 0},\T^{\ge 0})\mapsto\Sim(\T^0)$ gives its inverse, where $\Sim(\T^0)$ denotes the complete set of isomorphism classes of simple objects of $\T^0$.
\end{cor}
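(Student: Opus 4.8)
The plan is to deduce the corollary from the preceding lemma together with two standard structural facts: that a bounded $t$-structure is determined by its heart, and that, for a pre-simple-minded collection $\Cal{S}$, the extension closure $\H_\Cal{S}=\Filt(\Cal{S})$ is a length abelian category whose isomorphism classes of simple objects are precisely the elements of $\Cal{S}$ (so that $\Sim$ and $\Filt$ are mutually inverse bijections between pre-simple-minded collections and their hearts). The first fact is the analogue, for an arbitrary bounded $t$-structure with heart $\H$, of the formula $\T^{\le 0}=\bigcup_{l\ge 0}\Sigma^l\H\ast\Sigma^{l-1}\H\ast\cdots\ast\H$ appearing in the preceding lemma, and is obtained from \cite[Remarque 1.3.14]{B82} in the same way; the second fact is where the semibrick condition and the negative $\Hom$-vanishing in the definition of a simple-minded collection are genuinely used.

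Granting these, I would first check that $\Cal{S}\mapsto(\T^{\Cal{S},\le 0},\T^{\Cal{S},\ge 0})$ takes values among algebraic $t$-structures. By the preceding lemma it is a $t$-structure with heart $\H_\Cal{S}$, which is a length category by the second fact, so it only remains to see that it is bounded. For this, note that the full subcategory $\U$ of objects whose cohomology with respect to this $t$-structure vanishes outside a finite range of degrees is closed under shifts, cones and direct summands, hence is thick; since $\U$ contains the heart $\H_\Cal{S}$ and therefore every element of $\Cal{S}$, it must equal $\T$, which is precisely the boundedness of the $t$-structure. Because $\Sim(\H_\Cal{S})=\Cal{S}$, the composite of this map with $(\T^{\le 0},\T^{\ge 0})\mapsto\Sim(\T^0)$ is the identity on $\SMC(\T)$, and in particular $\Cal{S}\mapsto(\T^{\Cal{S},\le 0},\T^{\Cal{S},\ge 0})$ is injective.

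For surjectivity, let $(\T^{\le 0},\T^{\ge 0})$ be an algebraic $t$-structure with length heart $\H=\T^0$ and set $\Cal{S}:=\Sim(\H)$. Then $\Cal{S}$ is a semibrick by Schur's lemma in the length abelian category $\H$; it satisfies the negative $\Hom$-vanishing because for $m\ge 1$ one has $\Sigma^{-m}S_j\in\T^{\ge m}\subseteq\T^{\ge 1}$ while $S_i\in\T^{\le 0}$, so $\Hom_\T(S_i,\Sigma^{-m}S_j)=0$ by the orthogonality axiom of the $t$-structure; and $\thick\Cal{S}=\T$ because $\Filt(\Cal{S})=\H$ — a length category is the extension closure of its simple objects and a heart is extension closed, so the two agree even though $\Filt$ is computed in $\T$ — whence $\thick\Cal{S}$ contains $\H$ and all its shifts, hence every object bounded with respect to the original $t$-structure, hence all of $\T$. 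Thus $\Cal{S}\in\SMC(\T)$, and by the preceding lemma the $t$-structure $(\T^{\Cal{S},\le 0},\T^{\Cal{S},\ge 0})$ has heart $\H_\Cal{S}=\Filt(\Cal{S})=\H$; being a bounded $t$-structure with the same heart as the original one, it coincides with it by the first fact. This proves surjectivity, and the same heart-recovery argument shows that the two maps of the statement are mutually inverse.

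The \emph{main obstacle} is the second structural fact, that $\Filt(\Cal{S})$ is a length abelian category whose simple objects are exactly $\Cal{S}$. Its proof requires an induction on the filtration length of objects of $\Filt(\Cal{S})$ — using the semibrick condition and the negative $\Hom$-vanishing to show that $\Hom_\T$ and $\Ext^1_\T$ between such objects are computed entirely inside $\Filt(\Cal{S})$, that kernels and cokernels of morphisms remain in $\Filt(\Cal{S})$, and that each $S_i$ has no proper nonzero subobject there — and it is precisely the step where the semibrick hypothesis cannot be dropped. Everything else is bookkeeping with the $t$-structure axioms and the boundedness hypothesis.
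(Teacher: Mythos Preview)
Your proof is correct and follows the expected route. The paper does not give an explicit proof of this corollary at all; it simply asserts that it (together with the preceding lemma) ``can be easily deduced from \cite[Remarque 1.3.14]{B82}'', so your argument is a faithful unpacking of what the paper leaves implicit, including the standard fact that $\Filt(\Cal{S})$ is a length abelian category with simple objects exactly $\Cal{S}$ and the reconstruction of a bounded $t$-structure from its heart.
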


It is well known that for every bounded heart $\H$ of $\T$, the inclusion induces an isomorphism $K_0(\H)\stackrel{\simeq}{\to}K_0(\T)$ (see \cite{H88} for the case of standard $t$-structures). In particular, for every simple-minded collection $\Cal{S}$ of $\T$, we have $K_0(\T)\simeq\ZZ^{\Cal{S}}$.

\begin{rmk}\label{rmk:SMC}
Let $\Cal{S}$ and $\L$ be two simple-minded collection of $\T$. If $\L\subseteq\Filt(\Cal{S})$, then $\Cal{S}=\L$.
\end{rmk}
\begin{proof}
By assumption $\T^{\L,\le 0}\subseteq\T^{\Cal{S},\le 0}$ and $\T^{\L,\ge 0}\subseteq\T^{\Cal{S},\ge 0}$. Therefore, $(\T^{\Cal{S},\le 0},\T^{\Cal{S},\ge 0})=(\T^{\L,\le 0},\T^{\L,\ge 0})$.
\end{proof}


\subsection{Notation and facts about dg algebras}

\begin{dfn}
Let $A$ be a dg $R$-algebra.
\begin{itemize}
\item[(1)] $A$ is called \emph{locally finite} if $H^i(A)$ has finite-length for every $i\in\ZZ$.
\item[(2)] $A$ is called \emph{non-positive} if its cohomology is concentrated in non-positive part.
\item[(3)] $A$ is called \emph{positive} if its cohomology is concentrated in the non-negative part and its zeroth-cohomology is a semisimple $R$-algebra.
\end{itemize}
\end{dfn}

\begin{dfn}
Let $A$ be a dg $R$-algebra. Let $\K(A)$ denote the homotopy category of dg $A$-modules and $\D(A)$ denote the derived category of $A$ (see \cite{Ke1}). We call $\per A:=\thick A$ the \emph{perfect derived category} of $A$. We define the \emph{finite-length derived category} of $A$ by
\begin{align*}
\Dfl(A):=\{X\in\D(A)\mid\textstyle{\sum_{k\in\ZZ}}\length H^k(X)<\infty\}.
\end{align*}
If $R=k$ is a field, we put $\Dfd(A):=\Dfl(A)$ 
and call it the \emph{finite-dimensional derived category}. In addition, we put
\begin{align*}
\Dfl^-(A):=\{X\in\D(A)\mid\textstyle{\sum_{n\leq k}}\length H^k(X)<\infty,\ \text{for every } n\in\ZZ\},
\end{align*}
and we define $\Dfl^+(A)$ in a similar way.
\end{dfn}

\begin{rmk}
Let $A$ be a dg $R$-algebra.
\begin{itemize}
\item[(1)] If $A$ is non-positive, $A$ is a silting object of $\per A$.
\item[(2)] If $A$ is positive, $\ind(\add A)$ is a simple-minded collection of $\per A$, where $\ind(-)$ denotes the set of isomorphism classes of indecomposable objects.
\end{itemize}
\end{rmk}

We then define an important class of dg algebras, the (topologically) homologically smooth dg algebras.
\begin{dfn}
Let $A$ be a dg $\k$-algebra. In the below, tensor products are taken over $\k$.
\begin{itemize}
\item[(1)]\cite{KS09,G07} $A$ is called \emph{homologically smooth} if $A\in\per(A^{\op}\otimes A)$.
\item[(2)]\cite{KeY11}  $A$ is called \emph{topologically homologically smooth} if $A$ is bilaterally pseudo-compact and $A\in\per(A^{\op}\widehat{\otimes} A)$,
\end{itemize}
\end{dfn}

\begin{eg}\cite[Theorem A.17]{KeY11}
Let $(Q,W)$ be a quiver with potential. Then the complete Ginzburg dg algebra $\widehat{\Gamma}(Q,W)$ over $\k$ is topologically homologically smooth.
\end{eg}

Quivers with potential play an important role in cluster theory (see for example \cite{Am09} and \cite{BY13}).

\begin{prop}
Let $A$ be a dg $\k$-algebra. If A is homologically smooth or topologically homologically smooth, then $\Dfd(A)\subseteq\per A$ holds.
\end{prop}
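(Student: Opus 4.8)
The statement to prove is that for a homologically smooth or topologically homologically smooth dg $\k$-algebra $A$, one has $\Dfd(A)\subseteq\per A$. The plan is to reduce everything to the homologically smooth case (the topological case being identical with $\widehat{\otimes}$ in place of $\otimes$ and pseudo-compact completions), and then exploit the fact that $A$ is perfect as a bimodule. The key observation is that for any $X\in\D(A)$, there is a functorial isomorphism $X\simeq X\Ltensor_A A$, where we view the right-hand $A$ as the diagonal bimodule; more usefully, $X\simeq \RHom_A(A,X)$ and, tensoring the bimodule resolution of $A$ through, one transports the perfectness of ${}_AA_A$ over to a statement about $X$.

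\textbf{Key steps.} First I would recall the standard fact that $-\Ltensor_A A\colon\D(A)\to\D(A)$ is (isomorphic to) the identity functor, and that it is computed by an object of $\per(A^{\op}\otimes A)$ when $A$ is homologically smooth; write $P^\bullet\to A$ for a finite resolution of ${}_AA_A$ by summands of shifts of $A^{\op}\otimes A$. Second, for $X\in\Dfd(A)$, apply $X\Ltensor_A(-)$ to this resolution: since $X\Ltensor_A(A^{\op}\otimes A)\simeq X\otimes_\k A$ (as a right $A$-module, essentially $\bigoplus$-copies of $A$ indexed by a basis of the underlying complex of $X$), and because $X$ has finite total dimension over $\k$, each term $X\Ltensor_A P^i$ is a \emph{finite} direct sum of shifts of $A$, hence lies in $\per A=\thick A$. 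Third, $X\simeq X\Ltensor_A A$ is obtained from the finitely many objects $X\Ltensor_A P^i$ by finitely many cones (the resolution has finite length), so $X\in\thick A=\per A$. This gives the inclusion.

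\textbf{Main obstacle.} The delicate point is the finiteness bookkeeping in the second step: one needs that $X$, although only an object of the derived category, can be represented so that $X\otimes_\k A$ is genuinely a finite direct sum of copies of $A$ up to shift — i.e.\ one should replace $X$ by a bounded complex of finite-dimensional $\k$-vector spaces (possible since $X\in\Dfd(A)$ means $\sum_k\dim_\k H^k(X)<\infty$, and over a field every object of $\Dfd(A)$ is quasi-isomorphic to such a complex, or at least its image in $\D(A)$ has a perfect-over-$\k$ model). Equivalently, one invokes that $\Dfd(A)=\per(\k)\Ltensor_\k(-)$ applied along the unit $\k\to A$ lands in $\thick$ of $A$. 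In the topologically homologically smooth case the extra care is that $A$ is pseudo-compact and the tensor is the completed one, so ``$X\widehat\otimes_\k A$'' must be interpreted in the pseudo-compact derived category and one uses that $X$ finite-dimensional makes the completion harmless; this is where I would cite the conventions of \cite{KeY11}. Once the model of $X$ is fixed, the rest is a routine dévissage through the bimodule resolution, so I expect no further difficulty.
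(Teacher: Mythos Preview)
Your argument is correct and is precisely the standard proof: reduce to the diagonal bimodule, use a finite build of $A$ out of $A^{\op}\otimes A$ in $\per(A^{\op}\otimes A)$, and observe that $X\Ltensor_A(A^{\op}\otimes A)\simeq X\otimes_\k A\in\per A$ because $X$ is perfect over $\k$. The paper does not give an argument at all---it simply cites \cite[Lemma 4.1]{Ke2} for the homologically smooth case and \cite[Appendix A]{KeY11} for the topologically homologically smooth case---so your sketch is not a different route but rather the content of those citations spelled out. One small phrasing point: ``finite resolution by summands of shifts of $A^{\op}\otimes A$'' is slightly stronger than what $A\in\per(A^{\op}\otimes A)$ literally gives (a finite sequence of cones and retracts), but the d\'evissage you describe works verbatim with that weaker formulation, and your identification of the finiteness over $\k$ as the only real issue is on point.
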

\begin{proof}
For the homologically smooth case, see \cite[Lemma 4.1]{Ke2}. For the topologically homologically smooth case, see  \cite[Appendix A]{KeY11}.
\end{proof}

The following proposition plays an important role in producing silting objects from simple-minded collections. The first half part of the proposition is from \cite[Corollary 4.7]{KN1}, and the rest appeared in an earlier version of \cite{KN1} (see also \cite[Lemma 3.12]{AMY19}). 
\begin{prop}\label{prop:silting-for-positive-dg-algebra}
Let $B$ be a positive dg $R$-algebra. Then, there exists a dg $B$-module $S_B$ (unique up to isomorphism) such that the graded $H^*(B)$-module $H^*(S)$ is isomorphic to $H^0(B)$. In addition, if $B$ has a finite-length zeroth-cohomology, then $S_B$ is a silting object of $\Dfl(B)$.
\end{prop}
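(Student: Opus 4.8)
The plan is to construct $S_B$ directly and then verify the silting properties. First I would build the dg $B$-module $S_B$: since $B$ is positive, $B^{\geq 1}$ (the submodule in cohomological degrees $\geq 1$) together with the kernel of the differential out of degree $0$ assembles into a dg ideal, and $H^0(B)$ becomes a dg $B$-module via the projection $B \to B/B^{\geq 1}$-type construction. More precisely, I would set $S_B$ to be the dg $B$-module whose underlying complex is $H^0(B)$ placed in degree $0$ with zero differential, with the $B$-action factoring through the dg algebra map $B \to H^0(B)$ that exists because $B$ is positive (the truncation $\tau_{\leq 0}B$ has cohomology only in degree $0$, hence is quasi-isomorphic to $H^0(B)$, and $B \to \tau_{\leq 0}B$ is a dg algebra map when the cohomology is concentrated in non-negative degrees). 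This makes $H^*(S_B) = H^0(B)$ as a graded $H^*(B)$-module, where $H^{>0}(B)$ acts as zero. Uniqueness up to isomorphism should follow from the fact that any dg $B$-module $S$ with $H^*(S) \cong H^0(B)$ as graded $H^*(B)$-modules is concentrated in degree $0$ up to quasi-isomorphism, and the $B$-action is then forced.

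For the silting claim, assume $H^0(B)$ has finite length, so $S_B \in \Dfl(B)$. I would first check $\thick(S_B) = \Dfl(B)$: any $X \in \Dfl(B)$ has bounded finite-length cohomology, and by dévissage along the (co)homological degrees together with the fact that $H^0(B)$ is a finite-length $H^0(B)$-module (hence a finite iterated extension of simple $H^0(B)$-modules, each of which is a subquotient reachable from $S_B$ in the derived category via the $B$-action factoring through $H^0(B)$), one reduces $X$ to finitely many shifts of $S_B$ by repeatedly taking cones along truncation triangles. Then I would verify the presilting condition $\Hom_{\D(B)}(S_B, \Sigma^{>0} S_B) = 0$: compute $\RHom_B(S_B, S_B)$ using a suitable resolution; because $B$ is positive and $S_B$ lives in degree $0$ with action through $H^0(B)$, the complex computing $\RHom_B(S_B, \Sigma^i S_B)$ vanishes for $i > 0$ — this is where positivity of $B$ is essential, as the positive-degree part of $B$ would contribute to negative $\Ext$ groups rather than positive ones, and the semisimplicity of $H^0(B)$ (part of the definition of positive) kills the degree-$0$ self-extensions beyond $\Hom$.

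The main obstacle I expect is the vanishing $\Hom_{\D(B)}(S_B, \Sigma^{>0}S_B) = 0$ and, relatedly, producing a concrete enough model of $S_B$ (or of a resolution of it) to make this computation rigorous rather than heuristic. The subtlety is that $S_B$ need not be K-projective or K-injective over $B$, so one cannot naively read off $\Ext$ groups from the degree-$0$ placement; one must either use the positive grading to build an explicit semifree resolution whose generators sit in non-positive degrees (so that maps into positive shifts of $S_B$ vanish for degree reasons), or argue via the equivalence $\D(B) \simeq \D(\tau_{\leq 0}B)$-adjacent techniques and the structure of $H^*(B)$-modules. Once the resolution is in hand, the remaining verifications — that $S_B \in \Dfl(B)$ and that $\thick S_B = \Dfl(B)$ — are routine dévissage arguments along cohomological truncations. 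I would also remark that the finite-length hypothesis on $H^0(B)$ enters only in ensuring $S_B$ has finite total cohomology length and that simple $H^0(B)$-modules are finite in number, which is what lets $\thick S_B$ exhaust all of $\Dfl(B)$.
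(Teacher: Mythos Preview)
The paper does not give its own proof of this proposition: it is quoted from \cite[Corollary 4.7]{KN1} (for the existence and uniqueness of $S_B$) and from an earlier version of \cite{KN1}/\cite[Lemma 3.12]{AMY19} (for the silting claim). So there is no in-paper argument to compare against; your proposal is essentially an attempt to reconstruct the cited proofs.

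Your overall plan matches what Keller--Nicol\'as do, but there is a technical slip in the construction of $S_B$. You assert that there is a dg algebra map $B\to H^0(B)$ ``because $B$ is positive'', obtained via $B\to\tau_{\le 0}B$. This is backwards: for a positive dg algebra the canonical map goes the other way, $\tau_{\le 0}B\hookrightarrow B$, and in general there is \emph{no} dg algebra morphism $B\to H^0(B)$ (the underlying complex of $B$ may well have nonzero terms in negative degrees, so one cannot simply kill $B^{\ge 1}$). What one actually has is the zigzag of dg algebra maps
\[
B \ \longleftarrow\ \tau_{\le 0}B \ \longrightarrow\ H^0(B),
\]
the first an inclusion and the second a quasi-isomorphism. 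So $H^0(B)$ is a priori only a dg $\tau_{\le 0}B$-module, and $S_B$ must be produced in $\D(B)$ by transporting along this zigzag (or, equivalently, by first replacing $B$ by a quasi-isomorphic model genuinely concentrated in non-negative degrees). Once you make this correction, your uniqueness argument and your d\'evissage for $\thick(S_B)=\Dfl(B)$ go through, and the presilting vanishing is, as you anticipated, obtained by building a semifree resolution of $S_B$ whose generators sit in non-positive degrees---this is precisely the step handled in the cited references.
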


\begin{rmk}
In \cite{KN1} and \cite{AMY19}, the base ring $R$ is assumed to be a field, but \cref{prop:silting-for-positive-dg-algebra} and the discussion of the basic properties of ST-pairs used in the following sections can be immediately generalized to the case where $R$ is a commutative artinian ring.
\end{rmk}


\subsection{ST-pairs}
In this subsection, we recall the notion of ST-pairs and their basic properties. Most of the contents of this subsection are taken from \cite{AMY19}. 

\begin{dfn}
A triangulated $R$-category $\T$ is called \emph{Hom-finite} if $\Hom_\T(X,Y)\in\mod R$ for every $X,Y\in\T$.
\end{dfn}

\begin{dfn}\cite[Definition 4.3]{AMY19}\label{def:ST-pair}
Let $\C$ and $\D$ be thick subcategories of an idempotent complete triangulated category $\T$.
The pair $(\C,\D)$ is called an \emph{ST-pair} inside $\T$ if there exists a silting object $M$ of $\C$ such that
\begin{itemize}
\item[(ST1)]  $\Hom_{\T}(M,T)$ is finite-length for any object $T$ of $\T$, 
\item[(ST2)] $(\T_{M}^{\le 0}, \T_{M}^{\ge 0})$ is a $t$-structure on $\T$,
\item[(ST3)] $\T=\bigcup_{n\in\mathbb{Z}}\T_M^{\leq n}$ and $\D=\bigcup_{n\in\mathbb{Z}}\T_M^{\geq n}$.
\end{itemize}
When we need to emphasize the silting object $M$, we call the triple $(\C,\D, M)$ an \emph{ST-triple}. 
\end{dfn}

\begin{prop}\cite[Proposition 5.2]{AMY19}
    Let $(\C,\D,M)$ be an ST-triple inside $\T$ and let $N$ be a silting object of $\C$. Then $(\C,\D,N)$ is an ST-triple.
\end{prop}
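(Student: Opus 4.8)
The statement to prove is: if $(\C,\D,M)$ is an ST-triple inside $\T$ and $N$ is another silting object of $\C$, then $(\C,\D,N)$ is again an ST-triple. The plan is to verify the three axioms (ST1)--(ST3) for $N$, using as the essential input the fact that $M$ and $N$ generate the same thick subcategory $\C$ and that silting objects of a triangulated category differ from one another in a controlled way. Concretely, since $N\in\thick M$ and $M\in\thick N$, there exist integers $a\le b$ with $N\in \M\ast\Sigma\M\ast\cdots\ast\Sigma^{b-a}\M$ after a shift (equivalently $N\in\thick M$ forces $N$ to be built from finitely many shifts of $\add M$), and symmetrically for $M$ in terms of $N$. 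I would extract a single integer $r\ge 0$ such that $\add N\subseteq\thick M$ is "reached within $r$ steps", i.e. $N$ lies in the $r$-fold extension closure of $\bigcup_{|i|\le r}\Sigma^i\add M$, and conversely $M$ lies in the $r$-fold extension closure of $\bigcup_{|i|\le r}\Sigma^i\add N$.

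First I would establish (ST1): for any $T\in\T$, $\Hom_\T(N,T)$ is finite length. Since $N$ is obtained from finitely many shifts $\Sigma^i M$ with $|i|\le r$ by finitely many extensions, applying $\Hom_\T(-,T)$ turns each extension triangle into a long exact sequence of $R$-modules, so $\Hom_\T(N,T)$ is a subquotient of a finite direct sum of the modules $\Hom_\T(\Sigma^i M,T)=\Hom_\T(M,\Sigma^{-i}T)$, each of which is finite length by (ST1) for $M$; finite length is closed under finite direct sums, subobjects, and quotients, so we are done. Next, for (ST2) and (ST3), the key observation is that the subcategories $\T^{\le n}_N$ and $\T^{\ge n}_N$ can be squeezed between shifts of $\T^{\le\bullet}_M$ and $\T^{\ge\bullet}_M$: from $N\in\Filt(\bigcup_{|i|\le r}\Sigma^i\add M)$ one gets $\T^{\le n}_M\subseteq\T^{\le n+r}_N$ and $\T^{\ge n}_M\subseteq\T^{\ge n-r}_N$, and from the reverse containment $M\in\Filt(\bigcup_{|i|\le r}\Sigma^i\add N)$ one gets $\T^{\le n}_N\subseteq\T^{\le n+r}_M$ and $\T^{\ge n}_N\subseteq\T^{\ge n-r}_M$ (here one uses that $\Hom_\T(M,\Sigma^{>k}X)=0$ for all shifts occurring forces the corresponding vanishing for $N$, and vice versa). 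The "sandwich" of vanishing conditions is what makes all of this go through.

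With these containments in hand, (ST3) is immediate: $\bigcup_n\T^{\le n}_N=\bigcup_n\T^{\le n}_M=\T$ and $\bigcup_n\T^{\ge n}_N=\bigcup_n\T^{\ge n}_M=\D$, since cofinal reindexing by $\pm r$ does not change a union over all integers. For (ST2), I would argue that $(\T^{\le 0}_N,\T^{\ge 0}_N)$ is a $t$-structure. The orthogonality $\Hom_\T(\T^{\le 0}_N,\T^{\ge 1}_N)=0$ and the closure properties $\T^{\le 0}_N\subseteq\T^{\le 1}_N$, $\T^{\ge 1}_N\subseteq\T^{\ge 0}_N$ are formal from the definitions (they hold for the pair defined by any presilting object). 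The substantive point is the existence of truncation triangles, i.e. $\T=\T^{\le 0}_N\ast\T^{\ge 1}_N$. For this I would take $X\in\T$, use that $X\in\T^{\le m}_M\cap\T^{\ge -m}_M$ for some large $m$ (boundedness from (ST3) applied to $M$, plus $X\in\D$ if needed — actually one only needs the $\T^{\le\bullet}_M$ side for the upper bound and must handle the lower end carefully, so I would instead invoke that $(\T^{\le\bullet}_M,\T^{\ge\bullet}_M)$ being a $t$-structure gives a full system of truncations), and then transfer the truncation decomposition from the $M$-$t$-structure to the $N$-one using the sandwich together with a standard argument: iteratively take cones against $\Sigma^i\add N$ to move an object from $\T^{\ge n-r}_N$ into $\T^{\ge n}_N$ while keeping the complementary part in $\T^{\le\bullet}_N$. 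Alternatively, and more cleanly, I expect the paper wants us to cite the general fact that two bounded co-$t$-structures with the same ambient thick category induce, via the ST-pair machinery, $t$-structures with comparable aisles — but since that is exactly what we are proving, the honest route is the sandwich-plus-induction argument.

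\textbf{Main obstacle.} The routine parts are (ST1) and (ST3); the real work is (ST2), specifically producing the truncation triangles for the pair $(\T^{\le 0}_N,\T^{\ge 0}_N)$ from those for $M$. The difficulty is that knowing $X$ sits in a bounded window of the $M$-filtration only places it in a slightly larger window of the $N$-filtration, and one must then genuinely construct the $N$-truncation by a finite sequence of octahedral/cone manipulations rather than just quoting an inclusion of subcategories; keeping track of which summand stays co-connective and which stays connective through these manipulations — and checking the construction terminates — is where I would expect to spend essentially all the effort. If the paper has already recorded a lemma saying "a presilting object whose aisle is sandwiched between two $t$-structure aisles itself defines a $t$-structure", this collapses to a one-line verification; absent that, the co-$t$-structure translation $N\rightsquigarrow(\T_{N,\ge0},\T_{N,\le0})$ from \cref{cor:silting-and-co-t-structure1} together with \cite[Corollary 5.8]{MSSS13} identifying it as a bounded co-$t$-structure on $\C$ is the cleanest lever, after which the associated $t$-structure on $\T$ should drop out of the ambient ST-pair structure.
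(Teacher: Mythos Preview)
The paper does not prove this proposition; it is quoted from \cite[Proposition 5.2]{AMY19} without argument, so there is no in-paper proof to compare against.

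Your outline is broadly correct and is in fact the standard route taken in \cite{AMY19}: the sandwich inclusions
\[
\T_M^{\le n-r}\subseteq\T_N^{\le n}\subseteq\T_M^{\le n+r},\qquad
\T_M^{\ge n+r}\subseteq\T_N^{\ge n}\subseteq\T_M^{\ge n-r}
\]
for a single $r\ge 0$ (coming from $N\in\C=\thick M$ and $M\in\thick N$ together with \cref{cor:silting-and-co-t-structure1}) immediately dispose of (ST1) and (ST3) exactly as you say.

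For (ST2) your sketch is incomplete. The actual argument in \cite{AMY19} is the ``intermediate aisle'' lemma you guess at near the end: given $X\in\T$, first take the $M$-truncation $\sigma_M^{\le r}X$ (which already lies in $\T_N^{\le 2r}$ by the sandwich), and then for $i=2r,2r-1,\dots,1$ successively take a cone of a right $\add\Sigma^i N$-approximation, which exists because $\Hom_\T(N,-)$ is finite length by the already-established (ST1). Each cone drops the object one step down in the $N$-filtration while the cofiber accumulates in $\T_N^{\ge 1}$; the presilting condition on $N$ is what guarantees the cofiber stays on the correct side. After finitely many steps you obtain the required triangle in $\T_N^{\le 0}\ast\T_N^{\ge 1}$. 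Your proposal gestures at this (``iteratively take cones against $\Sigma^i\add N$'') but does not say where the approximations come from or why the process terminates with the cofiber in $\T_N^{\ge 1}$, and your brief confusion about needing $X\in\T_M^{\ge -m}$ shows you had not yet isolated the mechanism. The alternative route you mention at the very end --- going through the bounded co-$t$-structure on $\C$ --- does not by itself produce a $t$-structure on the ambient $\T$, so it would not avoid the work.
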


We define $\sigma_M^{\le n}$ as the right adjoint  functor of the inclusion $\T_M^{\le n}\to\T$ and define $\sigma_M^{\ge n}$ as the left adjoint functor of the inclusion $\T_M^{\ge n}\to\T$. 

\begin{rmk}\cite[Remark 4.4(b)]{AMY19}
By (ST1), it follows that $\C$ is Hom-finite Krull-Schmidt.
\end{rmk}

We collect some examples of ST-triples. In the following, (1)-(5) are taken from \cite[Remark 5.7 and Proposition 6.12]{AMY19}, and we can check (6) by the Auslander-Reiten formula
\begin{align*}
D\Hom_{\D(A)}(X,Y)\simeq\Hom_{\D(A)}(Y,\nu X)
\end{align*}
which is natural in $X\in\per A$ and $Y\in\D(A)$, where $\nu X:=D\RHom_A(X,A)$ (see \cite[section 10]{Ke1}).
\begin{eg}\label{eg:non-positive-is-ST-pair}
Let $\Lambda$ be an artinian $R$-algebra and $A$ be a locally finite non-positive dg $R$-algebra.
\begin{itemize}
\item[(1)] If $\Lambda$ is an ordinary ring having finite global dimension, $(\Db(\mod\Lambda),\Db(\mod\Lambda),\Lambda)$ is an ST-triple inside $\Db(\mod\Lambda)$.
\item[(2)] If $A$ is topologically homologically smooth or homologically smooth, $(\per A,\Dfl(A),A)$ is an ST-triple inside $\per A$.
\item[(3)] If $\Lambda$ is an ordinary ring, $(\Kb(\proj\Lambda),\Db(\mod\Lambda),\Lambda)$ is an ST-triple inside $\Db(\mod\Lambda)$.
\item[(4)] If $A$ is proper, $(\per A,\Dfl(A),A)$ is an ST-triple inside $\Dfl(A)$.
\item[(5)] In general, $(\per A,\Dfl(A),A)$ is an ST-triple inside $\Dfl^-(A)$.
\item[(6)] In general, $((\thick DA)^{\op},\Dfl(A)^{\op},DA)$ is an ST-triple inside $\Dfl^+(A)^{\op}$.
\end{itemize}
\end{eg}

In the rest of this subsection, we fix an ST-triple $(\C,\D, M)$ inside $\T$. By the following proposition, we can get a simple-minded collection of $\D$ that is orthogonal to a given silting object $M$.

\begin{prop}\cite[Corollary 4.8]{AMY19}\label{prop:Hom-duality-between-silting-and-smc}
The heart $\T_M^0$ is Hom-finite and has a projective generator $\sigma_M^{\ge 0}(M)$. Let $\Cal{S}=\{S_1,\ldots,S_n\}:=\Sim(\T_M^0)$ and $\{M_1,\ldots,M_n\}:=\ind(\add M)$. Then $\Cal{S}$ is a simple-minded collection of $\D$, and by renumbering, we have
\begin{align*}
\Hom_\T(M_j,\Sigma^pS_i)\simeq\begin{cases} {}_{\End_{\T}(S_i)}\End_{\T}(S_i) & \text{if } i=j\ \text{and } p=0,\\
0 &  \text{otherwise}.\end{cases}
\end{align*}
\end{prop}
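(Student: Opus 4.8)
The plan is to identify the heart $\HH:=\T_M^{0}$ with the module category of the Artinian ring $\Lambda:=\End_\C(M)$, and then read off both assertions from elementary representation theory together with the $t$-structure axioms.

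Since $M$ is presilting we have $M\in\T_M^{\le0}$, so the truncation triangle $N\to M\to P\to\Sigma N$ is available, where $P:=\sigma_M^{\ge0}(M)$ and $N:=\sigma_M^{\le-1}(M)\in\T_M^{\le-1}$. Applying $\Hom_\T(M,-)$ and $\Hom_\T(-,X)$ to this triangle and using the vanishings $\Hom_\T(\T_M^{\le-1},\T_M^{\ge0})=0=\Hom_\T(\T_M^{\le-1},\T_M^{\ge1})$ (from the $t$-structure axioms) together with the observation that $Y\in\T_M^{\le-1}$ forces $\Hom_\T(M,\Sigma^{\ge0}Y)=0$, I obtain, naturally in $X\in\HH$,
\[
\End_\T(P)\cong\End_\T(M)=\Lambda,\qquad \Hom_\T(P,X)\cong\Hom_\T(M,X).
\]
The second isomorphism, applied to $\Sigma X'$ in place of $X$ together with $\Sigma X'\in\T_M^{\le-1}$, shows $\Hom_\T(P,\Sigma\HH)=0$, so $P$ is projective in $\HH$; and if $0\neq X\in\HH$ then $X\notin\T_M^{\ge1}$ (otherwise $X\in\T_M^{\le0}\cap\T_M^{\ge1}=0$), so some $\Hom_\T(M,\Sigma^{j}X)$ with $j\le0$ is nonzero, whence $j=0$ because $X\in\T_M^{\ge0}$, and hence $\Hom_\T(P,X)\cong\Hom_\T(M,X)\neq0$; thus $P$ is a projective generator of $\HH$. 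By (ST1), $\Lambda$ is a finite-length $R$-algebra, hence an Artinian ring, and the standard argument (each object of $\HH$ admits a finite projective presentation by copies of $P$, using right-exactness of $\Hom_\T(P,-)$ and noetherianity of $\Lambda$) promotes this to an equivalence $\Hom_\T(P,-)\colon\HH\xrightarrow{\sim}\mod\Lambda^{\op}$. In particular $\HH$ is Hom-finite and a length category; its simple objects $\Cal{S}=\{S_1,\dots,S_n\}$ are in bijection with $\ind(\add M)$ (both being indexed by the simples of the basic Artinian ring $\Lambda$); and writing $P_j:=\sigma_M^{\ge0}(M_j)$ one has $P=\bigoplus_j P_j$ with each $P_j$ an indecomposable projective of $\HH$, and after renumbering $\top P_j\cong S_j$.

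It then remains to check that $\Cal{S}$ is a simple-minded collection of $\D$ and to compute the $\Hom$-groups. The semibrick condition holds by Schur's lemma inside the abelian category $\HH$ (note $\End_\T(S_i)=\End_\HH(S_i)$ since $\HH$ is a full subcategory of $\T$); the vanishing $\Hom_\T(\Cal{S},\Sigma^{<0}\Cal{S})=0$ is immediate because for $k\ge1$ one has $\Sigma^{-k}S_j\in\T_M^{\ge1}$ while $S_i\in\T_M^{\le0}$; and $\thick_\D\Cal{S}=\D$ follows because the $t$-structure $(\T_M^{\le0},\T_M^{\ge0})$ restricts, by (ST3) and thickness of $\D$, to a bounded $t$-structure on $\D$ with heart $\HH=\Filt(\Cal{S})$, so that $\D=\thick_\D\HH=\thick_\D\Cal{S}$. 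For the final formula: if $p\neq0$ then $\Hom_\T(M,\Sigma^{p}S_i)=0$ already because $S_i\in\T_M^{0}$, hence $\Hom_\T(M_j,\Sigma^{p}S_i)=0$; if $p=0$ then the isomorphism $\Hom_\T(M_j,S_i)\cong\Hom_\HH(P_j,S_i)$ (the analogue for $M_j$ of the isomorphism displayed above, valid since $M_j\in\T_M^{\le0}$) reduces the claim to the standard fact $\Hom_{\HH}(P_j,S_i)\cong\delta_{ij}\,\End_\HH(S_i)$ as a left $\End_\HH(S_i)$-module, for the renumbering with $\top P_j\cong S_j$.

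I expect the main obstacle to be the first step's passage from the formal facts ``$P$ is projective'' and ``$P$ generates'' to a genuine equivalence $\HH\simeq\mod\Lambda^{\op}$: one must produce finite projective presentations inside $\HH$ and verify essential surjectivity, and it is precisely here that (ST1) (the finiteness, hence noetherianity, of $\Lambda$) together with (ST3) (which furnishes the generator property via $\HH\subseteq\T_M^{\ge0}$) are essential. Once the heart is known to be the module category of an Artinian ring, the remaining assertions are bookkeeping with the $t$-structure axioms and basic representation theory.
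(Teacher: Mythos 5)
Your proof is correct, and since the paper offers no argument of its own here (the proposition is quoted from \cite[Corollary 4.8]{AMY19}), your write-up in effect reconstructs the proof of the cited source: establish $\End_\T(\sigma_M^{\ge 0}(M))\cong\End_\T(M)$ and $\Hom_\T(\sigma_M^{\ge 0}(M),-)\cong\Hom_\T(M,-)$ on the heart via the truncation triangle, upgrade the projective generator to an equivalence $\T_M^0\simeq\mod\End_\C(M)^{\op}$ using (ST1), and read off the orthogonality formula from $\Hom(P_j,S_i)\cong\delta_{ij}\End(S_i)$. The only step a careful reader might want spelled out is the generation claim $\thick_{\D}\Cal{S}=\D$: one needs both halves of (ST3) to see that each $X\in\D$ lies in $\T_M^{\ge -n}\cap\T_M^{\le m}$ for some $n,m$, hence is a finite iterated extension of shifts of objects of $\T_M^0\subseteq\D$ -- you invoke this correctly but tersely.
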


In the rest of this subsection, unless otherwise stated, let $\Cal{S}$ denotes the set $\{S_1,\ldots,S_n\}$, and we call it the \emph{simple-minded collection associated with $M$}. Let $S$ denote $\bigoplus_{i=1}^n S_i$.

The following proposition shows the left-right symmetry of ST-pair.

\begin{prop}\cite[Proposition 4.17]{AMY19}\label{prop:compatibility-between-M-and-T}
\ 
\begin{itemize}
\item[(1)] $\Hom_{\T}(T,S)$ is finite-length for every $T\in\T$. In particular, $\D$ is Hom-finite Krull-Schmidt.
\item[(2)] We have $(\T_{M,\ge 0},\T_{M,\leq0})=(\T^\Cal{S}_{\ge 0},\T^\Cal{S}_{\le 0})$, and it is a co-t-structure on $\T$ whose co-heart is $\add M$.
\end{itemize}
\end{prop}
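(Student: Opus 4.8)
The plan is to first construct the co-$t$-structure $(\T_{M,\ge 0},\T_{M,\le 0})$ on all of $\T$, and then read off every assertion from it. Write $P:=\sigma_M^{\ge 0}(M)$; by \cref{prop:Hom-duality-between-silting-and-smc} the heart $\T_M^0$ is a Hom-finite length category with simples $\Cal{S}=\{S_1,\dots,S_n\}$, projective generator $P$, and $\Hom_\T(M,\Sigma^p S)=0$ for all $p\neq 0$; also, by the adjunction defining $\sigma_M^{\ge 0}$, $\Hom_\T(M,Z)\cong\Hom_\T(P,Z)$ for $Z\in\T_M^{\ge 0}$. Two elementary facts will be used throughout. \emph{(a)} If $X\in\T_M^{\le m}$ and $m$ is maximal with $H^m_M(X)\neq 0$ (where $H^i_M$ denotes cohomology for the $t$-structure $(\T_M^{\le 0},\T_M^{\ge 0})$), then a dévissage of the triangle $\sigma_M^{\le m-1}X\to X\to\Sigma^{-m}H^m_M(X)\to$ against the orthogonality $\Hom_\T(\T_M^{\le a},\T_M^{\ge a+1})=0$ gives $\Hom_\T(\Sigma^m X,S)\cong\Hom_\T(H^m_M(X),S)$, which is nonzero since a nonzero object of a length category has a simple (hence some $S_i$) as a quotient. \emph{(b)} For $X\in\T_M^{\le 0}$ the same dévissage gives $\Hom_\T(X,S)\cong\Hom_\T(H^0_M(X),S)$, which is finite-length because $\T_M^0$ is Hom-finite.

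\emph{The co-$t$-structure on $\T$.} Axioms (0) and (1) are formal: $\T_{M,\le 0}=\T_M^{\le 0}$ is closed under summands, and $\T_{M,\ge 0}=\C_{M,\ge 0}$ is the ``$\ge 0$''-part of the bounded co-$t$-structure on $\C$ from \cref{cor:silting-and-co-t-structure1} (so extension-closed and contained in $\C=\thick M$). Axiom (2), $\Hom_\T(\T_{M,\ge 1},\T_M^{\le 0})=0$, holds since $\T_{M,\ge 1}$ is generated under extensions by $\Sigma^{\le -1}\add M$ and $\Hom_\T(\Sigma^i M,Z)=\Hom_\T(M,\Sigma^{-i}Z)=0$ for $i\le -1$, $Z\in\T_M^{\le 0}$ (then $\Sigma^{-i}Z\in\T_M^{\le -1}$, and $\Hom_\T(M,\T_M^{\le -1})=0$). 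The substantial point is axiom (3), $\T=\T_{M,\ge 1}\ast\T_M^{\le 0}$, which I would prove by induction on the integer $N$ with $T\in\T_M^{\le N}$, available by (ST3); the case $N\le 0$ is trivial. For $N\ge 1$: applying $\Hom_\T(M,-)$ to $\sigma_M^{\le -1}(\Sigma^N T)\to\Sigma^N T\to H^N_M(T)\to$ and using $\Hom_\T(M,\T_M^{\le -1})=0$ shows $\Hom_\T(M,\Sigma^N T)\epi\Hom_\T(M,H^N_M(T))\cong\Hom_\T(P,H^N_M(T))$; lifting a projective cover $P^{\oplus k}\epi H^N_M(T)$ yields $f\colon(\Sigma^{-N}M)^{\oplus k}\to T$ inducing an epimorphism on $H^N_M$, hence $\cone(f)\in\T_M^{\le N-1}$. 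The induction hypothesis applied to $\cone(f)$ and the octahedral axiom (using $(\Sigma^{-N}M)^{\oplus k}\in\T_{M,\ge 1}$ and that $\T_{M,\ge 1}$ is extension-closed) give the required triangle for $T$. Finally the co-heart $\T_{M,\ge 0}\cap\T_M^{\le 0}=\C_{M,\ge 0}\cap\C_{M,\le 0}$ equals $\add M$ by \cref{cor:silting-and-co-t-structure1}.

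\emph{Part (1).} For $T\in\T$, take the co-$t$-structure triangle $N\to T\to T'\to\Sigma N$ with $N\in\T_{M,\ge 1}$, $T'\in\T_M^{\le 0}$. Since $N$ is an extension of objects of $\Sigma^{\le -1}\add M$ and $\Hom_\T(\Sigma^i M,S)=\Hom_\T(M,\Sigma^{-i}S)=0$ for $i\le -1$, dévissage gives $\Hom_\T(N,S)=0$, so $\Hom_\T(T',S)\epi\Hom_\T(T,S)$; by (b) the source is finite-length, hence so is $\Hom_\T(T,S)$. Consequently $\D$ is Hom-finite: each $Y\in\D$ is a finite iterated extension of shifts $\Sigma^j S_i$ via the bounded $t$-structure on $\D$ attached to the simple-minded collection $\Cal{S}$, and $\Hom_\T(X,\Sigma^j S_i)$ is a direct summand of $\Hom_\T(\Sigma^{-j}X,S)$, which is finite-length; being idempotent complete and Hom-finite over the commutative artinian ring $R$, $\D$ is Krull--Schmidt.

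\emph{Part (2), the equalities.} The ``$\le$'' equality: $\T_{M,\le 0}=\T_M^{\le 0}\subseteq\T^{\Cal{S}}_{\le 0}$ since $\Sigma^{>0}\T_M^{\le 0}\subseteq\T_M^{\le -1}$ is left-orthogonal to $\Cal{S}\subseteq\T_M^{\ge 0}$; and if some $X\in\T^{\Cal{S}}_{\le 0}$ were not in $\T_M^{\le 0}$, the maximal $m>0$ with $H^m_M(X)\neq 0$ (finite by (ST3)) would contradict $X\in\T^{\Cal{S}}_{\le 0}$ by (a). For the ``$\ge$'' pair, $\T_{M,\ge 0}\subseteq\T^{\Cal{S}}_{\ge 0}$ because $\T^{\Cal{S}}_{\ge 0}$ is extension-closed and $\Hom_\T(\Sigma^i M,\Sigma^{>0}S)=\Hom_\T(M,\Sigma^{>0}S)=0$ for $i\le 0$. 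The reverse inclusion $\T^{\Cal{S}}_{\ge 0}\subseteq\T_{M,\ge 0}$ is the crux: for $X\in\T^{\Cal{S}}_{\ge 0}$, dévissage over the simples first gives $\Hom_\T(X,\Sigma^{\ge 1}H)=0$ for every $H$ in the length category $\T_M^0$; decomposing $X$ as $N\to X\to X'$ with $N\in\T_{M,\ge 0}$, $X'\in\T_{M,\le -1}=\T_M^{\le -1}$, one must show $X'=0$, which by (a) applied to the top cohomology of $X'$ together with the vanishing just noted reduces to controlling $X'$ one cohomological degree at a time. I expect this to be the main obstacle, because a priori $X'$ need not be bounded below; handling it is where one uses the finiteness from part (1) and, ultimately, the left–right symmetric ``mirror'' of the co-$t$-structure construction above, in which $\Cal{S}$ and its $t$-structure on $\D$ take over the role of $M$ and its co-$t$-structure on $\C$ — precisely the analysis carried out in \cite{AMY19}.
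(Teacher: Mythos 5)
The paper offers no proof of this statement (it is quoted from \cite{AMY19}), so I can only judge your reconstruction on its own terms. Most of it is sound: the dévissage facts (a) and (b), the verification that $(\T_{M,\ge 0},\T_{M,\le 0})$ is a co-$t$-structure on all of $\T$ (the induction on $N$ with $T\in\T_M^{\le N}$, lifting a projective cover of the top cohomology through $\Hom_\T(M,\Sigma^N T)\simeq\Hom_\T(P,H^N_M(T))$, is exactly the right mechanism and the cone estimate checks out), the identification of the co-heart with $\add M$, part (1), the equality $\T_{M,\le 0}=\T^{\Cal{S}}_{\le 0}$, and the inclusion $\T_{M,\ge 0}\subseteq\T^{\Cal{S}}_{\ge 0}$. (One convention note: you treat $\T_{M,\ge 1}$ as the extension closure of $\Sigma^{\le -1}\add M$, which is what the co-$t$-structure axioms force and what \cite{AMY19} uses; the formula for $\T_{\M,\ge n}$ as printed in this paper reads with the opposite sign on the shifts, so you have silently corrected an apparent typo rather than followed the displayed definition.)

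The genuine gap is exactly where you flag it: the inclusion $\T^{\Cal{S}}_{\ge 0}\subseteq\T_{M,\ge 0}$ is never proved, and your own reduction shows why the tools you have assembled do not suffice. Decomposing $X\in\T^{\Cal{S}}_{\ge 0}$ via the co-$t$-structure as $N\to X\to X'$ with $N\in\T_{M,\ge 1}$ and $X'\in\T_M^{\le 0}$ yields only $\Hom_\T(X',\Sigma^{\ge 1}S)=0$, and fact (a) then controls the \emph{top} cohomological degree of $X'$ while saying nothing about the degrees below it; since $X'$ need not lie in $\D$, it need not be bounded below for the $t$-structure $(\T_M^{\le 0},\T_M^{\ge 0})$, so no finite dévissage terminates and one cannot conclude $X'=0$. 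The same obstruction blocks the equivalent reformulation $\T_{M,\ge 0}={}^{\perp}(\Sigma\T^{\Cal{S}}_{\le 0})\supseteq\T^{\Cal{S}}_{\ge 0}$: the test objects in $\Sigma\T^{\Cal{S}}_{\le 0}$ are again not known to be bounded below. This inclusion is the substantive content of the proposition --- it is what forces an arbitrary object of $\T$ that is left-orthogonal to $\Sigma^{>0}\Cal{S}$ to lie in $\C$ and to be \emph{finitely} built from non-positive shifts of $M$, and it is the input the present paper actually uses (e.g.\ to identify $\Psi(\T_{M,\ge0})$ and to prove injectivity in \cref{prop:silting-SMC-duality}). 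Closing it requires a further finiteness or boundedness argument (in \cite{AMY19} this is where the length of $\Hom_\T(X,S)$ from part (1) is exploited to make an approximation process terminate), not merely the formal dévissage you have set up; deferring it to ``the analysis carried out in \cite{AMY19}'' leaves the one non-formal step of the proof undone.
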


Next, we consider the functorially finiteness of the heart.

\begin{dfn}
Let $\X$ be a subcategory of $\T$, and $T\in\T$. A morphism $f\colon X\to T$, with $X\in\X$, is called a \emph{right $\X$-approximation} of $T$ if 
\begin{align*}
\Hom_\T(X',f)\colon\Hom_\T(X',X)\to\Hom_\T(X',T)
\end{align*}
is surjective for every $X'\in\X$. We call $\X$ a \emph{contravariantly finite subcategory} of $\T$ if for every $T\in\T$ has a right $\X$-approximation. Dually, we difine \emph{left $\X$-approximations} and \emph{covariantly finite subcategories}. A subcategory $\X$ is called \emph{functorially finite} if $\X$ is contravariantly finite and covariantly finite.
\end{dfn}

\begin{lem}\label{lem:covariantly-finiteness-of-M-heart}
The heart $\D_M^0$ is covariantly finite in $\T$.
\end{lem}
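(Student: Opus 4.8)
The plan is to show that every object $T\in\T$ admits a left $\D_M^0$-approximation by combining the $t$-structure truncations from (ST2) with the heart cohomology functor. First I would recall that by \cref{prop:Hom-duality-between-silting-and-smc} the heart $\HH:=\T_M^0$ is a Hom-finite abelian category with projective generator $P:=\sigma_M^{\ge0}(M)$, and that the associated cohomological functor is $H^0_M:=\sigma_M^{\le 0}\sigma_M^{\ge 0}\colon\T\to\HH$. Since $\D=\bigcup_n\T_M^{\ge n}$ by (ST3), for a fixed $T\in\T$ I would look at the truncation triangle $\sigma_M^{\ge 1}T\to T\to\sigma_M^{\le 0}T\to$ and then the triangle $\sigma_M^{\ge 0}(\sigma_M^{\le 0}T)\to\sigma_M^{\le 0}T\to\sigma_M^{\le -1}T\to$; the middle cohomology $H^0_M(T)$ sits in $\HH=\D_M^0$, and the canonical map $T\to H^0_M(T)$ is the natural candidate for the approximation.

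The key step is to verify the approximation property: for every $X\in\D_M^0$ the map $\Hom_\T(T,H^0_M(T))\to$ ... wait, I have the direction backwards; for a \emph{left} approximation $f\colon T\to X_0$ with $X_0\in\D_M^0$ I need $\Hom_\T(f,X)\colon\Hom_\T(X_0,X)\to\Hom_\T(T,X)$ to be surjective for all $X\in\D_M^0$. So I would take the canonical morphism $g\colon T\to H^0_M(T)$ coming from the unit $T\to\sigma_M^{\le 0}T$ followed by the counit-type map $\sigma_M^{\le 0}T\to\sigma_M^{\le 0}\sigma_M^{\ge 0}\sigma_M^{\le 0}T\cong H^0_M(T)$ (using that truncations of a $t$-structure behave well). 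Given any $h\colon T\to X$ with $X\in\HH$: since $X\in\T_M^{\le 0}$, $h$ factors through $\sigma_M^{\le 0}T$; since $X\in\T_M^{\ge 0}$ and $\Hom_\T(\T_M^{\le -1},X)=0$, the induced map $\sigma_M^{\le 0}T\to X$ factors through $H^0_M(T)=\tau^{\ge 0}_M\sigma_M^{\le 0}T$ — this is exactly the universal property of the heart cohomology functor $H^0_M$ as target of morphisms from $T$ into the heart. Hence $h$ factors through $g$, which is precisely surjectivity of $\Hom_\T(g,X)$. This gives that $\D_M^0$ is covariantly finite in $\T$, since $H^0_M(T)$ lies in $\D_M^0$ (it lies in $\HH=\T_M^0$, and it lies in $\D$ because $\sigma_M^{\ge 0}\sigma_M^{\le 0}T$ is built from truncations and $\D$ is a thick subcategory closed under the relevant operations — more directly, $H^0_M(T)$ is a finite-length object of $\HH$ whose objects all lie in $\D$ by \cref{prop:Hom-duality-between-silting-and-smc} since $\Sim(\T_M^0)=\Cal{S}\subseteq\D$).

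The main obstacle I anticipate is the bookkeeping around whether $H^0_M(T)$ genuinely lies in $\D$ (not merely in $\T$): this requires knowing that the heart $\T_M^0$ is contained in $\D$, which follows because $\T_M^0$ is a length category (its simple objects form the simple-minded collection $\Cal{S}$ of $\D$ by \cref{prop:Hom-duality-between-silting-and-smc}) and $\D$ is a thick, hence extension-closed, subcategory of $\T$. Once that is in place, the factorization argument is a formal consequence of the defining adjunctions of $\sigma_M^{\le n}$ and $\sigma_M^{\ge n}$ and the orthogonality axiom of the $t$-structure $(\T_M^{\le 0},\T_M^{\ge 0})$; no delicate estimate is needed. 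I would therefore organize the proof as: (i) recall $\T_M^0\subseteq\D$, so $\D_M^0=\T_M^0$; (ii) for $T\in\T$ set $X_0:=H^0_M(T)\in\D_M^0$ with its canonical map $g\colon T\to X_0$; (iii) show any $T\to X$ with $X\in\D_M^0$ factors through $g$ via the two truncation steps; (iv) conclude covariant finiteness.
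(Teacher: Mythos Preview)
Your argument has a genuine gap: the map ``$T\to\sigma_M^{\le 0}T$'' you invoke does not exist. By the paper's conventions, $\sigma_M^{\le 0}$ is the \emph{right} adjoint of the inclusion $\T_M^{\le 0}\hookrightarrow\T$, so the canonical morphism is the counit $\sigma_M^{\le 0}T\to T$, not a unit in the other direction. Consequently your key factorization step (``since $X\in\T_M^{\le 0}$, $h$ factors through $\sigma_M^{\le 0}T$'') is false: for a $t$-structure one has $\Hom_\T(\T_M^{\le 0},\T_M^{\ge 1})=0$, but there is no vanishing of $\Hom_\T(\T_M^{\ge 1},\T_M^{\le 0})$, so a map $T\to X$ with $X\in\T_M^{\le 0}$ need not factor through the aisle truncation. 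Concretely, for $A=k[x]/(x^2)$ and $T=k[-1]\in\Db(\mod A)$ one has $H^0_M(T)=0$ while $\Hom_\T(T,k)=\Ext^1_A(k,k)\neq 0$, so your candidate $T\to H^0_M(T)$ cannot be a left $\D_M^0$-approximation. The heart of a $t$-structure is not covariantly finite from the $t$-structure axioms alone.

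What the paper does instead is exactly to supply the missing left approximation into $\T_M^{\le 0}$: by \cref{prop:compatibility-between-M-and-T} the pair $(\T_{M,\ge 0},\T_{M,\le 0})$ is a \emph{co}-$t$-structure on $\T$, and since $\T_{M,\le 0}=\T_M^{\le 0}$ by definition, the co-$t$-structure triangle $U\to T\to V$ with $U\in\T_{M,\ge 1}$, $V\in\T_M^{\le 0}$ and $\Hom_\T(\T_{M,\ge 1},\T_M^{\le 0})=0$ makes $T\to V$ a left $\T_M^{\le 0}$-approximation. Only then does the $t$-structure step you had in mind apply: for $V\in\T_M^{\le 0}$ the unit $V\to\sigma_M^{\ge 0}V\in\T_M^0$ is a reflection, hence a left $\T_M^0$-approximation, and composing gives covariant finiteness of $\T_M^0=\D_M^0$ in $\T$. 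The essential ingredient you are missing is the co-$t$-structure from \cref{prop:compatibility-between-M-and-T}.
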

\begin{proof}
By \cref{prop:compatibility-between-M-and-T},  $(\T_{M,\ge 0},\T_{M,\le 0})$ is a co-$t$-structure on $\T$ and hence $\T_M^{\le 0}=\T_{M,\le 0}$ is covariantly finite in $\T$. Since $(\T_M^{\le 0},\T_M^{\ge 0})$ is a $t$-structure, $\T_M^0$ is covariantly finite in $\T_M^{\le 0}$. Therefore, $\T_M^0$ is covariantly finite in $\T$.
\end{proof}

\begin{cor}\label{cor:functorially-finiteness-of-standard-heart}
Let $A$ be a locally finite non-positive dg $R$-algebra. For every silting object $M$ of $\per A$, the heart $\Dfl(A)_M^0$ is functorially finite in $\Dfl(A)$.
\end{cor}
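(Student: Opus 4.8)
The goal is to upgrade \cref{lem:covariantly-finiteness-of-M-heart} from mere covariant finiteness to \emph{functorial} finiteness in the specific situation where $\T = \Dfl(A) = \Dfd(A)$ (here $R=\k$ is a field, so $\Dfl(A) = \Dfd(A)$, and by \cref{eg:non-positive-is-ST-pair}(5) the triple $(\per A, \Dfl(A), A)$ — and hence $(\per A, \Dfl(A), M)$ for any silting $M$ of $\per A$ — is an ST-triple inside $\Dfl^-(A)$). Covariant finiteness of $\D_M^0 = \Dfl(A)_M^0$ in $\T = \Dfl^-(A)$, hence a fortiori in $\Dfl(A) \subseteq \Dfl^-(A)$, is immediate from \cref{lem:covariantly-finiteness-of-M-heart}. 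So the only thing left to prove is \emph{contravariant} finiteness of $\Dfl(A)_M^0$ in $\Dfl(A)$, and the plan is to obtain this by a left-right symmetry / duality argument, exploiting \cref{eg:non-positive-is-ST-pair}(6).

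First I would set up the duality. By \cref{eg:non-positive-is-ST-pair}(6), the triple $((\thick DA)^{\op}, \Dfl(A)^{\op}, DA)$ is an ST-triple inside $\Dfl^+(A)^{\op}$. Applying \cref{lem:covariantly-finiteness-of-M-heart} to \emph{this} ST-triple gives that the corresponding heart is covariantly finite in $\Dfl^+(A)^{\op}$; passing back through the opposite-category equivalence $D(-)\colon \Dfl(A) \to \Dfl(A)^{\op}$ (or rather $\Dfl^+(A) \to \Dfl^-(A)^{\op}$, which is an exact anti-equivalence because $R=\k$ is a field and $D = \Hom_\k(-,\k)$ is an exact duality on finite-dimensional spaces), this translates into contravariant finiteness of the dual heart inside $\Dfl^-(A)$. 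The key compatibility I need here is that the $t$-structure on $\Dfl(A)$ cut out by a silting object $M$ of $\per A$ is carried, under the duality $D$, to the $t$-structure on $\Dfl(A)^{\op}$ cut out by $DM$ viewed as a (co)silting-type object for the ST-triple of \cref{eg:non-positive-is-ST-pair}(6); concretely, $D$ interchanges the roles of $\T_M^{\le 0}$ and $\T_{DM}^{\ge 0}$-type aisles and interchanges contravariant with covariant finiteness. Once this matching of hearts is in place, the heart $\Dfl(A)_M^0$ is seen to be both covariantly finite (from \cref{lem:covariantly-finiteness-of-M-heart} applied to the ST-triple inside $\Dfl^-(A)$) and contravariantly finite (from the dual statement applied to the ST-triple inside $\Dfl^+(A)^{\op}$), hence functorially finite; and since $\Dfl(A)$ sits inside both $\Dfl^-(A)$ and $\Dfl^+(A)$ as a thick subcategory closed under the relevant truncations, the approximations can be taken with source and target inside $\Dfl(A)$ itself.

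The main obstacle I anticipate is bookkeeping the duality precisely: one must check that $DM$, for $M$ a silting object of $\per A$, really does play the structural role required by the ST-triple $((\thick DA)^{\op}, \Dfl(A)^{\op}, DA)$ — i.e. that an arbitrary silting object of $\per A$ dualizes to an admissible choice of distinguished object for that opposite ST-triple — and that the induced hearts on $\Dfl(A)$ and on $\Dfl(A)^{\op}$ correspond under $D$ up to the appropriate shift. This rests on the Auslander–Reiten / Serre-type formula $D\Hom_{\D(A)}(X,Y) \simeq \Hom_{\D(A)}(Y,\nu X)$ recorded in the excerpt (natural in $X \in \per A$, $Y \in \D(A)$), together with the invariance statement \cite[Proposition 5.2]{AMY19} that any silting object of $\C$ yields an ST-triple; modulo this identification, the functorial finiteness follows formally. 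A secondary, more routine point is confirming that right/left approximations computed in $\Dfl^\pm(A)$ restrict to $\Dfl(A)$, which holds because $\Dfl(A)$ is a thick subcategory stable under the truncation functors $\sigma_M^{\le n}, \sigma_M^{\ge n}$ on the bounded part.
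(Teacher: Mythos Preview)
Your overall strategy matches the paper's exactly: apply \cref{lem:covariantly-finiteness-of-M-heart} to the ST-triple of \cref{eg:non-positive-is-ST-pair}(5) for covariant finiteness, and to the opposite ST-triple of \cref{eg:non-positive-is-ST-pair}(6) for contravariant finiteness. The paper's proof is literally two sentences doing just this.

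There is, however, one genuine slip in your bookkeeping. The object $DM$ does not live where you need it: $D=\Hom_R(-,E)$ sends right $A$-modules to left $A$-modules, so $DM\in\D(A^{\op})$, not in $\thick DA\subseteq\D(A)$, and $D$ is \emph{not} an anti-autoequivalence of $\Dfl(A)$ (your claimed equivalence $D\colon\Dfl^+(A)\to\Dfl^-(A)^{\op}$ lands in $\Dfl^-(A^{\op})^{\op}$). The correct silting object for the opposite ST-triple is $\nu M=D\RHom_A(M,A)$, obtained via the Nakayama equivalence $\nu\colon\per A\stackrel{\sim}{\to}\thick DA$; this is precisely what the paper uses. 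With $\nu M$ in place, the Auslander--Reiten formula you already cite gives
\[
\Hom_{\Dfl^+(A)^{\op}}(\nu M,\Sigma^p_{\op}X)\simeq D\Hom_{\D(A)}(M,\Sigma^{-p}X),
\]
so the heart for $\nu M$ in the opposite ST-triple is exactly $(\Dfl(A)_M^0)^{\op}$, and \cref{lem:covariantly-finiteness-of-M-heart} yields contravariant finiteness of $\Dfl(A)_M^0$ in $\Dfl(A)$ with no further work. (Your restriction to $R=\k$ a field is also unnecessary; the argument goes through verbatim over any commutative artinian $R$, as in the paper.)
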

\begin{proof}
By applying \cref{lem:covariantly-finiteness-of-M-heart} to $(\Dfl(A),\per A,M)$, it follows that $\Dfl(A)_M^0$ is covariantly finite in $\Dfl(A)$. Similarly, by applying \cref{lem:covariantly-finiteness-of-M-heart} to $(\Dfl(A)^{\op},(\thick DA)^{\op},\nu M)$, it follows that $(\Dfl(A)_M^0)^{\op}$ is covariantly finite in $\Dfl(A)^\op$.
\end{proof}


\section{ST-correspondence for ST-pairs}\label{section:ST-correspondence-for-ST-pairs}

In this section, let $(\C,\D,M)$ be an ST-triple inside an algebraic triangulated category $\T\simeq H^0(\A)$ where $\A$ is a dg enhancement of $\T$. We may assume that $\T=H^0(\A)$. Let $\Cal{S}$ be the simple-minded collection of $\D$ associated with $M$ (\cref{prop:Hom-duality-between-silting-and-smc}), and we define a positive dg $R$-algebra $B:=\Hom_\A(S,S)^{\op}$. By \cref{prop:compatibility-between-M-and-T}, the positive dg algebra $B$ is locally finite. 

\begin{dfn}
We define a contravariant triangulated functor $\Psi\colon\T\to\D(B)$ so that the following diagram is commutative:

\begin{center}
\begin{tikzpicture}[auto]
\node (11) at (0, 1.8) {$\T=H^0(\A)$}; 
\node (12) at (6.0, 1.8) {$\D(A)$};
\node (22) at (3.0, 0) {$\K(A),$};   
\draw[<-] (22) to node {$\scriptstyle \Hom_\A(-,S)$} (11);
\draw[->] (11) to node {$\scriptstyle \Psi$} (12);
\draw[->] (22) to node {$\scriptstyle $} (12);
\end{tikzpicture}
\end{center}
where $\K(A)\to\D(A)$ is the canonical quotient functor.
\end{dfn}

In \cref{subsection:positive}, we associate the ST-pair with subcategories of $\D(A)$, and in \cref{subsection:ST}, we demonstrate the ST-correspondence using the results of Keller and Nicol{\'a}s \cite{KN1}.

\subsection{ST-pairs to positive dg algebras}\label{subsection:positive}

The aim of this subsection is to prove the following proposition:

\begin{prop}\label{prop:negative-to-positive}
Let $\Psi\colon\T\to\D(B)$ be a contravariant triangulated functor defined above. 
\begin{itemize}
\item[(1)] For every $X\in \T$ and $Y\in \D$, $\Psi$ induces an isomorphism
\begin{align*}
\Hom_{\T}(X,Y)\stackrel{\simeq}{\to} \Hom_{\D(B)}(\Psi(Y),\Psi(X)),
\end{align*}
and we have $\Psi(S)\simeq B$. In particular, $\Psi$ restricts to an equivalence $\D \to \per(B)^{\op}$,
\item[(2)] For every $X\in \C$ and $Y\in \T$, $\Psi$ induces an isomorphism
\begin{align*}
\Hom_{\T}(X,Y)\stackrel{\simeq}{\to} \Hom_{\D(B)}(\Psi(Y),\Psi(X)),
\end{align*}
and we have $\Psi(M)\simeq S_B$ (see \cref{prop:silting-for-positive-dg-algebra} for the definition of $S_B$). In particular, $\Psi$ restricts to an equivalence $\C \to \Dfl(B)^{\op}$.
\end{itemize}
\end{prop}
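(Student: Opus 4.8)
The plan is to build $\Psi$ as the functor corepresented by the fixed object $S$ (via the dg enhancement $\A$), and then to verify the two claimed Hom-isomorphisms by a devissage over suitable generating classes, passing from the "local" case (where one of the arguments is a shift of $S$, resp. of $M$) to the general case by the five lemma along triangles. First I would make precise the construction: for $X\in\T=H^0(\A)$, the dg $B=\Hom_\A(S,S)^\op$-module $\Psi(X)$ is $\Hom_\A(X,S)$ with its natural right $\Hom_\A(S,S)$-module structure (hence left $B$-module, i.e. an object of $\D(B)$ after inverting quasi-isomorphisms); contravariance and triangulatedness are formal from the dg-enhanced setting, and $\Psi(S)\simeq\Hom_\A(S,S)=B$ is immediate. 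The adjunction/Yoneda computation then gives, for all $X\in\T$ and all $p\in\ZZ$,
\begin{align*}
\Hom_{\D(B)}(\Psi(S),\Sigma^{-p}\Psi(X))\simeq H^p\Hom_\A(X,S)\simeq\Hom_\T(X,\Sigma^p S),
\end{align*}
which is exactly the desired isomorphism when the first argument is $S$ (with $Y=S$, all shifts). This is the base case for part (1).

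For part (1) in general: fix $Y\in\D$ and show the map $\Hom_\T(X,Y)\to\Hom_{\D(B)}(\Psi(Y),\Psi(X))$ is an isomorphism for all $X\in\T$. By \cref{prop:compatibility-between-M-and-T}, $\D=\T^{\Cal S}_{\le 0}$-type union, more usefully $\D=\thick\Cal{S}$ built from $\H_\Cal{S}=\Filt(\Cal{S})$, so every $Y\in\D$ lies in a finite iterated extension of shifts of the $S_i$'s, i.e. of direct summands of shifts of $S$. Since both $\Hom_\T(X,-)$ and $\Hom_{\D(B)}(\Psi(-),\Psi(X))$ are cohomological (the latter because $\Psi$ is triangulated, contravariant, so it sends triangles to triangles) and additive, the class of $Y$ for which the comparison map is an isomorphism is closed under shifts, direct summands, and extensions; it contains $S$ by the base case; hence it contains all of $\D$. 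This proves the first displayed isomorphism. Restricting to $Y,X\in\D$ and using that $\Psi(S)\simeq B$ together with $\D=\thick\Cal S$, one gets that $\Psi|_\D$ is fully faithful with essential image $\thick_{\D(B)}(B)=\per(B)$, read in $\D(B)^\op$; so $\Psi$ restricts to an equivalence $\D\simeq\per(B)^\op$.

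For part (2) the roles are swapped: fix $X\in\C$ and let $Y\in\T$ vary. The key input is that $\C=\thick M$ (silting), so it suffices to treat $X=M$, and then — since both functors in $Y$ are cohomological — to check the comparison map $\Hom_\T(M,Y)\to\Hom_{\D(B)}(\Psi(Y),\Psi(M))$ is an isomorphism for $Y$ ranging over a generating class of $\T$. By (ST3), $\T=\bigcup_n\T_M^{\le n}$, and the heart $\T_M^0$ has simple objects $\Cal{S}$ (\cref{prop:Hom-duality-between-silting-and-smc}), so every object of $\T$ is a finite extension of shifts of the $S_i$; thus it is enough to check the case $Y=S$. There, the left side is $\Hom_\T(M,\Sigma^p S)$, computed by \cref{prop:Hom-duality-between-silting-and-smc} (concentrated in $p=0$, where it is $\End_\T(S)$ with its module structure), and the right side is $\Hom_{\D(B)}(B,\Sigma^{-p}\Psi(M))=H^p\Psi(M)$; so the content is the identification $H^*\Psi(M)\cong H^0(B)$ of graded $H^*(B)$-modules. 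That is precisely the characterization of the object $S_B$ in \cref{prop:silting-for-positive-dg-algebra}, so $\Psi(M)\simeq S_B$ and the comparison map is an isomorphism for $Y=S$, hence for all $Y\in\T$ by devissage. Finally, $\C=\thick M$ with $\Psi(M)\simeq S_B$ forces $\Psi(\C)=\thick_{\D(B)}(S_B)$ inside $\D(B)^\op$; since $B$ is positive locally finite, $S_B$ is a silting object of $\Dfl(B)$ (\cref{prop:silting-for-positive-dg-algebra}) and $\thick S_B=\Dfl(B)$, so $\Psi$ restricts to an equivalence $\C\simeq\Dfl(B)^\op$, and it is fully faithful on $\C$ by the just-proved $\Hom$-isomorphism.

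I expect the main obstacle to be the base-case identification of $\Psi(M)$ in part (2), i.e. proving $H^*\Psi(M)\cong H^0(B)$ as graded $H^*(B)$-modules: one must show $\Hom_\A(M,S)$, as a dg $B$-module, has cohomology concentrated in degree zero and equal to $\End_\T(S)=H^0(B)$ with the correct action. The vanishing $H^{p}\Hom_\A(M,S)=\Hom_\T(M,\Sigma^pS)=0$ for $p\neq 0$ and the degree-zero value follow from \cref{prop:Hom-duality-between-silting-and-smc}; the module-structure compatibility is then forced by uniqueness in \cref{prop:silting-for-positive-dg-algebra} once one knows $\Psi(M)\in\Dfl(B)$, which holds because $\Psi(M)$ has finite-length total cohomology. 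A secondary point requiring care is that the devissage arguments need the comparison maps to be morphisms of cohomological functors (naturality in the triangle variable), which is immediate from the functoriality of $\Psi$ but should be stated; and one must make sure the two "fix $X$, vary $Y$" and "fix $Y$, vary $X$" reductions use closure under extensions on the correct side, which works because $\Psi$ being triangulated turns the $X$-variable into a cohomological (contravariant) functor on the target as well.
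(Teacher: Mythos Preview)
Your argument for part (1) is fine and matches the paper's: the base case $Y=S$ is the Yoneda/representability computation, and the devissage over $\D=\thick\Cal{S}$ is legitimate because $\Cal{S}$ is a simple-minded collection of $\D$.

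The gap is in part (2). You write: ``By (ST3), $\T=\bigcup_n\T_M^{\le n}$, and the heart $\T_M^0$ has simple objects $\Cal{S}$, so every object of $\T$ is a finite extension of shifts of the $S_i$.'' This is false in general. The condition (ST3) says that every object of $\T$ is bounded \emph{above} with respect to the $t$-structure $(\T_M^{\le0},\T_M^{\ge0})$, but not bounded below; the objects that are bounded on both sides are exactly those of $\D$ (this is the second half of (ST3)). Thus $\thick\Cal{S}=\D$, not $\T$, and your devissage on $Y$ only reaches $Y\in\D$, which is already covered by part (1). Concretely, in \cref{eg:non-positive-is-ST-pair}(5) one has $\T=\Dfl^-(A)$, and an object with nonzero cohomology in infinitely many negative degrees is not a finite extension of shifts of simples.

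The paper's proof of (2) addresses exactly this obstruction. Instead of a thick-subcategory devissage on $Y$, it approximates an arbitrary $Y\in\T$ by its truncations $\sigma_M^{\ge -n}(Y)\in\D$. Two lemmas do the work: first, since $X\in\C$ is built from finitely many shifts of $M$, the map $\Hom_\T(X,Y)\to\Hom_\T(X,\sigma_M^{\ge -n}(Y))$ is an isomorphism for $n\gg0$ (\cref{lem:truncation-and-limit}); second, on the $\D(B)$ side, $\Psi(Y)$ is the homotopy colimit of the $\Psi(\sigma_M^{\ge -n}(Y))$ (\cref{lem:hocolim-approximation}), and one controls $\Hom_{\D(B)}(\hocolim,-)$ via the defining triangle and a $\lim^1$-vanishing that again uses the eventual stabilization from \cref{lem:truncation-and-limit} together with part (1). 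Your identification $\Psi(M)\simeq S_B$ via \cref{prop:Hom-duality-between-silting-and-smc} and \cref{prop:silting-for-positive-dg-algebra} is correct and is also how the paper proceeds; it is the passage from $Y\in\D$ to $Y\in\T$ that requires the extra limit argument.
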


First, we prove only (1). Then, we move on to prepare the proof of (2).

\begin{proof}[Proof of (1)]
By definition of $\Psi$, we have $\Psi(S)=B$. It suffices to show that $\Psi\colon\Hom_{\T}(X,S)\to \Hom_{\D(B)}(\Psi(S),\Psi(X))$ is an isomorphism for every $X\in\T$. By definition, $\Psi$ is decomposed as follows:
\begin{align*}
\Hom_{H^0\A}(X,S)\to\Hom_{\K(B)}(\Hom_\A(S,S),\Hom_\A(X,S))\to\Hom_{\D(B)}(\Psi(S),\Psi(X)).
\end{align*}
It is easy to check that both maps are isomorphisms.
\end{proof}

We prepare two lemmas to prove the \cref{prop:negative-to-positive} (2).

\begin{lem}\label{lem:truncation-and-limit}
Let $X\in\C$ and $Y\in\T$. Then the morphism
\begin{align*}
\Hom_{\T}(X,Y)\to\Hom_{\T}(X,\sigma_M^{\ge-n}(Y))
\end{align*}
is isomorphic for every $n\gg 0$. In particular, the canonical morphism
\begin{align*}
    \Hom_\T(X,Y)\to\lim_{n\in\NN}\Hom_\T(X,\sigma_M^{\ge-n}(Y))
\end{align*}
is an isomorphism.
\end{lem}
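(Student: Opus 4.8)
The plan is to use the ST-axioms to show that $Y$ and its truncation $\sigma_M^{\ge -n}(Y)$ differ by something to which $X$ cannot map, once $n$ is large enough. First I would invoke the $t$-structure $(\T_M^{\le 0}, \T_M^{\ge 0})$ from (ST2) and the truncation triangle
\begin{align*}
\sigma_M^{\le -n-1}(Y)\to Y\to \sigma_M^{\ge -n}(Y)\to \Sigma\sigma_M^{\le -n-1}(Y).
\end{align*}
Applying $\Hom_\T(X,-)$ to this triangle produces a long exact sequence, so it suffices to prove that $\Hom_\T(X,\Sigma^i\sigma_M^{\le -n-1}(Y))=0$ for $i=0,1$ and all $n\gg 0$; equivalently $\Hom_\T(X,\sigma_M^{\le -m}(Y))=0$ for all $m\gg 0$. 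Since $X\in\C\subseteq\T$ and $M$ is a silting object of $\C$, we have $X\in\thick M$, so $X$ lies in $\T_{M,\le k}\cap\T_{M,\ge -k}$ for some $k$ by boundedness of the co-$t$-structure $(\T_{M,\ge0},\T_{M,\le0})$ (\cref{prop:compatibility-between-M-and-T}); in particular $X\in\T_{M,\ge -k}=\T_M^{\ge -k}$ (using \cref{prop:compatibility-between-M-and-T}(2) to identify the two).

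Next I would use the Hom-vanishing half of the $t$-structure axiom: $\Hom_\T(\T_M^{\ge a},\T_M^{\le b})=0$ whenever $a>b$. Concretely, the truncation $\sigma_M^{\le -m}(Y)$ lies in $\T_M^{\le -m}$, and $X\in\T_M^{\ge -k}$, so for $-k > -m$, i.e. $m>k$, we get $\Hom_\T(X,\sigma_M^{\le -m}(Y))=0$. Feeding this back into the long exact sequence above shows $\Hom_\T(X,Y)\to\Hom_\T(X,\sigma_M^{\ge -n}(Y))$ is an isomorphism for every $n\ge k$ (one must check the indices: $\sigma_M^{\le -n-1}(Y)\in\T_M^{\le -n-1}$ and $\Sigma\sigma_M^{\le -n-1}(Y)\in\T_M^{\le -n-2}$, and both $-n-1<-k$ and $-n-2<-k$ hold once $n\ge k$, killing the relevant $\Hom$ and $\Hom^1$ terms).

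For the "in particular" statement, the isomorphisms $\Hom_\T(X,Y)\xrightarrow{\sim}\Hom_\T(X,\sigma_M^{\ge -n}(Y))$ for $n\ge k$ are compatible with the transition maps in the inverse system $\{\Hom_\T(X,\sigma_M^{\ge -n}(Y))\}_{n\in\NN}$ — these transition maps $\sigma_M^{\ge -n}(Y)\to\sigma_M^{\ge -n+1}(Y)$ come from the canonical truncation morphisms, and naturality of the adjunction makes the triangle of Hom-groups commute. Since the system is eventually constant (an inverse system that stabilizes has limit equal to the stable value), the canonical map $\Hom_\T(X,Y)\to\lim_n\Hom_\T(X,\sigma_M^{\ge -n}(Y))$ is an isomorphism.

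The main obstacle I anticipate is purely bookkeeping rather than conceptual: getting all the index shifts right in the truncation triangle (which side is $\le$ and which is $\ge$, and how $\Sigma$ shifts the truncation bounds under the convention $\T_M^{\le n}=\Sigma^{-n}\T_M^{\le 0}$), and making sure the cited identification $\T_M^{\le 0}=\T_{M,\le 0}$, $\T_M^{\ge -k}=\T_{M,\ge -k}$ from \cref{prop:compatibility-between-M-and-T}(2) is applied correctly so that membership $X\in\thick M$ genuinely lands $X$ in the correct aisle. No deep input beyond the ST-axioms and boundedness of the co-$t$-structure is needed.
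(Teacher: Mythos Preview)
Your overall strategy—invoke the truncation triangle, bound $X$ via the co-$t$-structure on $\C$, and kill $\Hom_\T(X,\sigma_M^{\le -n-1}(Y))$ for large $n$—is exactly the paper's one-line argument. But two of the steps you write down are false as stated. First, \cref{prop:compatibility-between-M-and-T}(2) does \emph{not} identify $\T_{M,\ge -k}$ with $\T_M^{\ge -k}$; it identifies the co-$t$-structure aisle $\T_{M,\ge 0}$ with $\T^{\Cal S}_{\ge 0}$ (vanishing of maps \emph{into} the simple-minded collection), which is a different subcategory. For any non-positive dg algebra $A$ with $H^{<0}(A)\ne 0$ and $M=A$, one has $A\in\T_{M,\ge 0}$ but $A\notin\T_M^{\ge 0}$. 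Second, the $t$-structure axiom gives $\Hom_\T(\T_M^{\le a},\T_M^{\ge b})=0$ for $a<b$, not the vanishing $\Hom_\T(\T_M^{\ge a},\T_M^{\le b})=0$ for $a>b$ that you invoke; the latter fails already for $A=k[\epsilon]/(\epsilon^2)$, where $k[-1]\in\T_M^{\ge 1}$, $k\in\T_M^{\le 0}$, and $\Hom(k[-1],k)=\Ext^1_A(k,k)\ne 0$.

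The repair is to stay on the co-$t$-structure side and argue directly from the definition of $\T_M^{\le -n-1}$. Since $X\in\C=\thick M$, boundedness of the co-$t$-structure on $\C$ says $X$ lies in some finite iterated extension $\Sigma^{j_1}M\ast\cdots\ast\Sigma^{j_p}M$. For $Z\in\T_M^{\le -n-1}$ one has $\Hom_\T(\Sigma^jM,Z)=\Hom_\T(M,\Sigma^{-j}Z)=0$ whenever $-j>-n-1$, i.e.\ $j\le n$; hence $\Hom_\T(X,\T_M^{\le -n-1})=0$ as soon as $n\ge\max_r j_r$. Feeding this (and its $\Sigma$-shift) into your long exact sequence completes the proof. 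This is precisely the content of the paper's proof, which records only the bound on $X$ and leaves this d\'evissage implicit.
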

\begin{proof}
There exists $i\in\NN$ such that $X\in\C_{M,\ge -i}$. Then, for every $n\ge i$, we have $\Hom_{\T}(X,Y)\stackrel{\simeq}{\to}\Hom_{\T}(X,\sigma_M^{\ge -n}(Y))$.
\end{proof}

For the notion of homotopy colimit, see \cite{N01}.
\begin{lem}\label{lem:hocolim-approximation}
Let $Y\in\T$. By the property of homotopy colimit, there exists a morphism
\begin{align*}
f\colon\hocolim_{n\in\NN}\Psi(\sigma_M^{\ge -n}(Y))\to\Psi(Y)
\end{align*}
that commutes the following diagram for every $n\in\NN$:
\begin{center}
    \begin{tikzcd}
        \Psi(\sigma_M^{\geq -n}(Y)) \ar[rr] \ar[rd] & & \underset{n\in\mathbb{N}}{\hocolim}\Psi(\sigma_M^{\geq -n}(Y)) \ar[ld,"f"] \\
        & \Psi(Y). &
    \end{tikzcd}
\end{center}
Every such morphism is an isomorphism.
\end{lem}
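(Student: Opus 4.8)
\textbf{Proof plan for \cref{lem:hocolim-approximation}.}

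The plan is to test the morphism $f$ against a set of objects large enough to detect isomorphisms in $\D(B)$, namely the free module $B$ and its shifts. First I would note that $\Psi(\sigma_M^{\ge -n}(Y))$ lies in $\per(B)^{\op}$ for every $n$: indeed $\sigma_M^{\ge -n}(Y)\in\T_M^{\ge -n}\subseteq\D$ by (ST3), so part (1) of \cref{prop:negative-to-positive} applies. To check that $f$ is an isomorphism it suffices to check that $\Hom_{\D(B)}(f,\Sigma^p B)$ is an isomorphism for all $p\in\ZZ$, since $\{\Sigma^p B\}_{p\in\ZZ}$ is a generating set for $\D(B)$ and the source and target of $f$ lie in a subcategory on which these objects detect isomorphisms (both $\hocolim$ and $\Psi(Y)$ need not be perfect, but $\RHom_B(-,B)$-vanishing against all shifts of the cofiber of $f$ still forces the cofiber to be zero because $B$ generates $\D(B)$ — more precisely I would use that the cofiber of $f$ is a homotopy colimit of objects of $\per(B)^{\op}$ mapping compatibly to $\Psi(Y)$, and argue via $H^p$). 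Concretely, applying $\Hom_{\D(B)}(-,\Sigma^p B)$ and using that $\Sigma^p B$ is compact, the homotopy colimit on the source turns into an honest colimit:
\begin{align*}
\Hom_{\D(B)}\Bigl(\hocolim_{n}\Psi(\sigma_M^{\ge -n}Y),\,\Sigma^p B\Bigr)\;\simeq\;\lim_{n}\Hom_{\D(B)}\bigl(\Psi(\sigma_M^{\ge -n}Y),\Sigma^p B\bigr).
\end{align*}

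Next I would identify both sides with $\Hom$-groups in $\T$. By \cref{prop:negative-to-positive}(1), $\Hom_{\D(B)}(\Psi(\sigma_M^{\ge -n}Y),\Sigma^p B)=\Hom_{\D(B)}(\Psi(\sigma_M^{\ge -n}Y),\Sigma^p\Psi(S))\simeq\Hom_{\T}(\Sigma^p S,\sigma_M^{\ge -n}Y)$ — wait, the variance: since $\Psi$ is contravariant and restricts to an (anti)equivalence $\D\to\per(B)^{\op}$, we get $\Hom_{\D(B)}(\Psi(A'),\Psi(A''))\simeq\Hom_{\T}(A'',A')$ for $A',A''\in\D$; hence the right-hand side above is $\lim_n\Hom_{\T}(\Sigma^{-p}S,\sigma_M^{\ge -n}(Y))$. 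Here I use $S\in\D$ and $\sigma_M^{\ge -n}(Y)\in\D$. Similarly, on the target side, $\Hom_{\D(B)}(\Psi(Y),\Sigma^p B)\simeq\Hom_{\D(B)}(\Psi(Y),\Psi(\Sigma^{-p}S))\simeq\Hom_{\T}(\Sigma^{-p}S,Y)$, this time invoking \cref{prop:negative-to-positive}(2) since the first argument $Y$ is arbitrary in $\T$ while $\Sigma^{-p}S$ must lie in $\C$ — but $S$ need not lie in $\C$. To get around this I would instead keep $Y$ in the \emph{contravariant} slot throughout: since $\sigma_M^{\ge -n}(Y)\in\D$, part (1) gives $\Hom_\T(\sigma_M^{\ge -n}(Y),\Sigma^p S)\simeq\Hom_{\D(B)}(\Sigma^p\Psi(S),\Psi(\sigma_M^{\ge -n}(Y)))$, and I should arrange the whole comparison using $\Psi$ applied with $S$ (equivalently $B$) in a fixed slot. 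The cleanest route is: the statement "$f$ is an isomorphism'' is equivalent to "$\Psi(Y)$ is a homotopy colimit of the $\Psi(\sigma_M^{\ge -n}Y)$ via the canonical maps'', and one checks this after applying $H^p=\Hom_{\D(B)}(B,\Sigma^p-)$. Since $B$ is compact, $H^p(\hocolim)\simeq\colim H^p$, so it reduces to showing $\colim_n H^p(\Psi(\sigma_M^{\ge -n}Y))\simeq H^p(\Psi(Y))$ for all $p$.

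Finally I would close the argument by translating the last isomorphism back to $\T$. We have $H^p(\Psi(Z))=\Hom_{\D(B)}(B,\Sigma^p\Psi(Z))=\Hom_{\D(B)}(\Psi(S),\Sigma^p\Psi(Z))$, and for $Z\in\D$ this is $\simeq\Hom_\T(Z,\Sigma^p S)$ by \cref{prop:negative-to-positive}(1) (contravariance swaps the arguments). The transition maps of the colimit are induced by $\sigma_M^{\ge -n-1}(Y)\to\sigma_M^{\ge -n}(Y)$. For fixed $p$, the simple-minded collection $\Cal{S}$ satisfies $\Hom_\T(\T_M^{\ge -n},\Sigma^p S)=0$ once $-n$ is sufficiently negative relative to $p$ (because $S\in\T_M^0$ up to the identification in \cref{prop:Hom-duality-between-silting-and-smc}, so $\Hom_\T(\T_M^{\ge m},\Sigma^p S)$ vanishes for $m>p$, and the cofiber of $\sigma_M^{\ge -n-1}Y\to\sigma_M^{\ge -n}Y$ lies in $\T_M^{-n-1}$); more to the point, the cofiber of $\sigma_M^{\ge -n}(Y)\to Y$ lies in $\T_M^{\le -n-1}$, hence $\Hom_\T(\mathrm{cofiber},\Sigma^p S)=0$ and $\Hom_\T(\Sigma^{-1}\mathrm{cofiber},\Sigma^p S)=0$ for $n\gg 0$ (fixed $p$), since $S\in\D$ is bounded with respect to the $t$-structure. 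Therefore $\Hom_\T(\sigma_M^{\ge -n}(Y),\Sigma^p S)\to\Hom_\T(Y,\Sigma^p S)$ is an isomorphism for $n\gg 0$, so the colimit stabilizes to $\Hom_\T(Y,\Sigma^p S)=H^p(\Psi(Y))$, as required; this is the exact analogue of \cref{lem:truncation-and-limit} with $S$ (rather than $X\in\C$) in the first slot. The main obstacle is bookkeeping the variance of the contravariant functor $\Psi$ and making sure every application of \cref{prop:negative-to-positive} is to a pair of objects for which the relevant part (1) or (2) genuinely applies — in particular keeping $S$/$B$ in the $\D$-slot throughout so that part (1) suffices, and verifying the boundedness/vanishing of $\Hom_\T(\T_M^{\le -n},\Sigma^p S)$ that makes the colimit stabilize.
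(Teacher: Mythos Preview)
Your approach is essentially the paper's: reduce to showing $H^p(f)$ is an isomorphism for all $p$, use that $H^p$ commutes with homotopy colimits, identify $H^p(\Psi(-))$ with $\Hom_\T(-,\Sigma^p S)$, and observe that the resulting system stabilizes because $\Hom_\T(\T_M^{\le -n-1},\Sigma^p S)=0$ for $n\ge p$. One simplification the paper makes that removes your variance worries: the identification $H^p(\Psi(Z))\simeq\Hom_\T(Z,\Sigma^p S)$ holds for \emph{every} $Z\in\T$ directly from the definition $\Psi=\Hom_\A(-,S)$ (followed by the quotient to $\D(B)$), so there is no need to invoke \cref{prop:negative-to-positive}(1) or to keep $Z$ in $\D$; note also that the canonical map runs $Y\to\sigma_M^{\ge -n}(Y)$ with \emph{fiber} $\sigma_M^{\le -n-1}(Y)$, rather than a cofiber as you wrote.
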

\begin{proof}
It suffices to show that $H^i(\hocolim_{n\in\NN}\Psi(\sigma_M^{\geq -n}))\to H^i(\Psi(Y))$ is isomorphic for every $i\in\ZZ$. We have a commutative diagram:

\begin{center}
    \begin{tikzcd} 
        & \underset{n\in\mathbb{N}}{\colim} H^i(\Psi(\sigma_M^{\ge -n}(Y)))\ar[rd,"u"]\ar[dd,"v"near start] & \\
        H^i(\Psi(\sigma_M^{\ge -n}(Y))) \ar[ru]\ar[rd]\ar[rr, crossing over] &  & H^i\qty(\hocolim_{n\in\NN}\Psi(\sigma_M^{\geq -n}(Y))) \ar[ld,"H^i(f)"] \\
        & H^i(\Psi(Y)) &
    \end{tikzcd}
\end{center}
The morphism $u$ is an isomorphism because $H^i(-)$ commutes with coproducts. Therefore, it remains to show that the morphism $v$ is an isomorphism. If $n\ge i$, the canonical map $\Hom_\T(\sigma_M^{\ge -n}(Y),\Sigma^iS)\to\Hom_\T(Y,\Sigma^iS)$ is an isomorphism because $\Hom_\T(\T_M^{\le-n-1},\Sigma^iS)=0$. Since the functor $H^i(\Psi(-))=H^i(\Hom_{\A}(-,S))$ is naturally identified with $\Hom_\T(-,\Sigma^iS)$, the morphism $v$ is an isomorphism.
\end{proof}

Now we are ready to complete the proof of  \cref{prop:negative-to-positive}.
\begin{proof}[Proof of \cref{prop:negative-to-positive} (2)]
We first show that $\Psi(M)\simeq S_B$. By \cref{prop:Hom-duality-between-silting-and-smc}, we have
\begin{align*}
{}_{H^0(B)^\op}H^p(\Psi(M))\simeq{}_{\End_{H^0(\A)}(S)}\Hom_{H^0(\A)}(M,\Sigma^pS)\simeq\begin{cases} {}_{\End_{H^0(\A)}(S)}\End_{H^0(\A)}(S) & \text{if } p=0\\
0 & \text{if } p\neq0,\end{cases}
\end{align*}
and this implies $\Psi(M)\simeq S_B$ by \cref{prop:silting-for-positive-dg-algebra}.

Next, we show that
\begin{align*}
    \Psi\colon\Hom_\T(X,Y)\to\Hom_{\D(B)}(\Psi(Y),\Psi(X))
\end{align*}
is an isomorphism for every $X\in\C$ and $Y\in\T$. We have the following commutative diagram:
\begin{center}
    \begin{tikzcd}
        \Hom_\T(X,Y)\ar[r,"\Psi"]\ar[d] & \Hom_{\D(B)}(\Psi(Y),\Psi(X))\ar[d] \\
        \lim_{n\in\NN}\Hom_\T(X,\sigma_M^{\ge-n}(Y))\ar[r,"\Psi"] & \lim_{n\in\NN}\Hom_{\D(B)}(\Psi(\sigma_M^{\ge-n}(Y)),\Psi(X)).
    \end{tikzcd}
\end{center}
The left vertical map is an isomorphism by \cref{lem:truncation-and-limit}, and the bottom map is also an isomorphism by \cref{prop:negative-to-positive} (1). Therefore, it suffices to show that the right vertical map is an isomorphism. We fix a morphism
\begin{align*}
    f\colon\hocolim_{n\in\NN}\Psi(\sigma_M^{\ge -n}(Y))\to\Psi(Y)
\end{align*}
that commutes the following diagram for every $n\in\NN$:
\begin{center}
    \begin{tikzcd}
        \Psi(\sigma_M^{\geq -n}(Y)) \ar[rr] \ar[rd] & & \underset{n\in\mathbb{N}}{\hocolim}\Psi(\sigma_M^{\geq -n}(Y)) \ar[ld,"f"] \\
        & \Psi(Y). &
    \end{tikzcd}
\end{center}
By \cref{lem:hocolim-approximation}, such morphism is an isomorphism. Applying $\Hom_{\D(B)}(-,\Psi(X))$, we have the following commutative diagram:
\begin{center}
    \begin{tikzcd}
        \Hom_{\D(B)}(\Psi(Y),\Psi(X))\ar[dd]\ar[rd,"{\Hom(f,\Psi(X))}"] & \\
        & \Hom_{\D(B)}(\hocolim_{n\in\NN}\Psi(\sigma_M^{\ge-n}(Y)),\Psi(X))\ar[ld]\\ \lim_{n\in\NN}\Hom_{\D(B)}(\Psi(\sigma_M^{\ge-n}(Y)),\Psi(X)). &
    \end{tikzcd}
\end{center}
Since $\Hom(f,\Psi(X))$ is an isomorphism, the only thing left to prove is that the map
\begin{align*}
    \Hom_{\D(B)}(\hocolim_{n\in\NN}\Psi(\sigma_M^{\ge-n}(Y)),\Psi(X))\to\lim_{n\in\NN}\Hom_{\D(B)}(\Psi(\sigma_M^{\ge-n}(Y)),\Psi(X))
\end{align*}
is an isomorphism. Let 
\begin{align*}
    \alpha\colon\bigoplus_{n\in\NN}\Psi(\sigma_M^{\ge-n}(Y))\to\bigoplus_{n\in\NN}\Psi(\sigma_M^{\ge-n}(Y))
\end{align*}
be the coproduct of the canonical morphisms $\{\Psi(\sigma_M^{\ge-n}(Y)\to\Psi(\sigma_M^{\ge-(n+1)}(Y))\}_{n\in\NN}$. Applying $\Hom_{\D(B)}(-\Psi(X))$ to the exact triangle
\begin{align*}
    \bigoplus_{n\in\NN}\Psi(\sigma_M^{\ge-n}(Y))\xto{1-\alpha}\bigoplus_{n\in\NN}\Psi(\sigma_M^{\ge-n}(Y))\to\hocolim_{n\in\NN}\Psi(\sigma_M^{\ge-n}(Y))\to\Sigma\bigoplus_{n\in\NN}\Psi(\sigma_M^{\ge-n}(Y)),
\end{align*}
we have the following long exact sequence:
\begin{center}
    \begin{tikzcd}
        \prod_{n\in\NN}\Hom_{\D(B)}(\Psi(\sigma_M^{\ge-n}(Y)),\Sigma^{-1}\Psi(X))\ar[r]\ar[d, phantom, ""{coordinate, name=Z}] & \prod_{n\in\NN}\Hom_{\D(B)}(\Psi(\sigma_M^{\ge-n}(Y)),\Sigma^{-1}\Psi(X))\ar[ld,rounded corners,
        to path={ -- ([xshift=2ex]\tikztostart.east)
        |- (Z) [near end]\tikztonodes
        -| ([xshift=-2ex]\tikztotarget.west)
        -- (\tikztotarget)}] \\
        \Hom_{\D(B)}(\hocolim_{n\in\NN}\Psi(\sigma_M^{\ge-n}(Y)),\Psi(X))\ar[d, phantom, ""{coordinate, name=W}]\ar[r] & \prod_{n\in\NN}\Hom_{\D(B)}(\Psi(\sigma_M^{\ge-n}(Y)),\Psi(X))\ar[ld,rounded corners,
        to path={ -- ([xshift=2ex]\tikztostart.east)
        |- (W) [near end]\tikztonodes
        -| ([xshift=-2ex]\tikztotarget.west)
        -- (\tikztotarget)}] \\
        \prod_{n\in\NN}\Hom_{\D(B)}(\Psi(\sigma_M^{\ge-n}(Y)),\Psi(X)).
    \end{tikzcd}
\end{center}
By \cref{prop:negative-to-positive} (1) and \cref{lem:truncation-and-limit}, the morphism 
\begin{align*}
\Hom_{\D(B)}(\Psi(\sigma_M^{\ge -(n+1)}(Y)),\Psi(\Sigma X))\to\Hom_{\D(B)}(\Psi(\sigma_M^{\ge -n}(Y)),\Psi(\Sigma X)) 
\end{align*}
is isomorphic for every $n\gg 0$, and so the morphism
\begin{align*}
\prod_{n\in\NN}\Hom_{\D(B)}(\Psi(\sigma_M^{\ge -n}(Y)),\Sigma^{-1}\Psi(X))\to\prod_{n\in\NN}\Hom_{\D(B)}(\Psi(\sigma_M^{\ge -n}(Y)),\Sigma^{-1}\Psi(X))
\end{align*}
is surjective. Therefore, we have
\begin{align*}
    &\Hom_{\D(B)}(\hocolim_{n\in\NN}\Psi(\sigma_M^{\ge-n}(Y)),\Psi(X)) \\
    &\simeq
    \ker\qty(\prod_{n\in\NN}\Hom_{\D(B)}(\Psi(\sigma_M^{\ge-n}(Y)),\Psi(X))\to\prod_{n\in\NN}\Hom_{\D(B)}(\Psi(\sigma_M^{\ge-n}(Y)),\Psi(X))) \\
    &\simeq
    \lim_{n\in\NN}\Hom_{\D(B)}(\Psi(\sigma_M^{\ge-n}(Y)),\Psi(X)).
\end{align*}
\end{proof}


\subsection{ST-correspondence for ST-pairs}\label{subsection:ST}

The following is the main theorem of this subsection.

\begin{thm}\label{thm:ST-for-dg-algebra}
Let $A$ be a locally finite non-positive dg $R$-algebra. Then there exists a bijective correspondence between
\begin{itemize}
\item[(1)] isomorphism classes of basic silting objects of $\per A$,
\item[(2)] bounded co-$t$-structures on $\per A$,
\item[(3)] isomorphism classes of simple-minded collections of $\Dfl(A)$,
\item[(4)] algebraic $t$-structures on $\Dfl(A)$.
\end{itemize}
\end{thm}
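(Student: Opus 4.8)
The plan is to derive Theorem~\ref{thm:ST-for-dg-algebra} as an immediate consequence of Theorem~\ref{thm:A} (i.e.\ \cref{thm:four-bijection}), so the only real work is checking that the hypotheses of \cref{thm:A} are met. Recall from \cref{eg:non-positive-is-ST-pair}(5) that for a locally finite non-positive dg $R$-algebra $A$, the triple $(\per A,\Dfl(A),A)$ is an ST-triple inside $\Dfl^-(A)$; in particular $(\per A,\Dfl(A))$ is an ST-pair. Since $A$ is non-positive, $A$ is a silting object of $\per A$, and $\per A=\thick A$ is the perfect derived category of a dg algebra, hence sits inside $\D(A)$, which is algebraic (it is the derived category of a dg category). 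So $\Dfl^-(A)$, being a thick subcategory of the algebraic triangulated category $\D(A)$, is itself algebraic, and it contains both $\per A$ and $\Dfl(A)$ as thick subcategories. Thus $(\per A,\Dfl(A))$ is an ST-pair inside an algebraic triangulated category, which is exactly the setting of \cref{thm:A}.

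First I would spell out the identifications. Applying \cref{thm:A} to the ST-pair $(\per A,\Dfl(A))$ with $\C=\per A$ and $\D=\Dfl(A)$ gives bijections between: isomorphism classes of basic silting objects of $\per A$; bounded co-$t$-structures on $\per A$; isomorphism classes of simple-minded collections of $\Dfl(A)$; and algebraic $t$-structures on $\Dfl(A)$. These are literally items (1)--(4) of the present theorem, so the statement follows verbatim. The only point requiring a word of justification is that the four classes of objects in \cref{thm:A} coincide with the four classes named here once we substitute $\C=\per A$, $\D=\Dfl(A)$; this is immediate from the definitions, using that $\per A$ is Krull--Schmidt (so $\silt(\per A)$ makes sense) — which itself follows from (ST1) via the remark after \cref{def:ST-pair} — and that $\Dfl(A)$ is Krull--Schmidt by \cref{prop:compatibility-between-M-and-T}(1).

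The only genuine obstacle is verifying the ambient-category hypothesis of \cref{thm:A}: one must be sure that $\per A$ and $\Dfl(A)$ both embed as thick subcategories of a single algebraic triangulated category. I would handle this by taking $\T:=\D(A)$ (or its subcategory $\Dfl^-(A)$ as in \cref{eg:non-positive-is-ST-pair}(5)), which is algebraic because it is the homotopy category of a dg category, and noting that both $\per A=\thick_{\D(A)}A$ and $\Dfl(A)$ are by construction thick subcategories of $\D(A)$; thick subcategories of algebraic triangulated categories are algebraic (the enhancement restricts). With $\D=\Dfl(A)$ and $\C=\per A$ this realizes $(\C,\D)$ as an ST-pair inside an algebraic triangulated category. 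Everything else is a direct citation of \cref{thm:A}, so no further argument is needed.

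\begin{proof}
By \cref{eg:non-positive-is-ST-pair}(5), $(\per A,\Dfl(A),A)$ is an ST-triple inside $\Dfl^-(A)$; in particular $(\per A,\Dfl(A))$ is an ST-pair. Both $\per A=\thick_{\D(A)}A$ and $\Dfl(A)$ are thick subcategories of $\D(A)$, which is algebraic, so $(\per A,\Dfl(A))$ is an ST-pair inside an algebraic triangulated category. Now apply \cref{thm:A} with $\C=\per A$ and $\D=\Dfl(A)$: the four sets there become exactly (1)--(4) above, and they are in bijection.
\end{proof}
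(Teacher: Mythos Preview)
Your proposal is correct and follows exactly the paper's approach: the paper's proof is the single sentence ``It is a special case of \cref{thm:four-bijection},'' and you have simply spelled out why the hypotheses of that theorem (ST-pair inside an algebraic triangulated category) are satisfied via \cref{eg:non-positive-is-ST-pair}(5). Your added justification that $\Dfl^-(A)$ (or $\D(A)$) is algebraic is a reasonable elaboration of what the paper leaves implicit.
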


To construct a silting object from a given simple-minded collection, we use the following lemma.
\begin{lem}\label{lem:smc-to-positive-dg-algebra}
For every simple-minded collection $\L$ of $\D$, there exists a locally finite positive dg $R$-algebra $B_\L$ and a contravariant triangulated functor $\Psi_\L\colon\T\to\D(B_\L)$ such that
\begin{itemize}
\item[(1)] $\Psi_\L(L)=B_\L$,
\item[(2)] $\Psi_\L$ restricts to an equivalence $\D\to\per(B_\L)^{\op}$,
\item[(3)] for every $X\in \C$ and $Y\in \D$, $\Psi_\L$ induces an isomorphism 
\begin{align*}
\Hom_{\T}(X,Y)\stackrel{\simeq}{\to} \Hom_{\D(B_\L)}(\Psi_\L(Y),\Psi_\L(X)),
\end{align*}

\item[(4)] $\Psi_\L$ restricts to an equivalence $\C \to \Dfl(B_\L)^{\op}$.
\end{itemize}
\end{lem}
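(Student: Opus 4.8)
The plan is to mimic the construction that produced $\Psi$ from $M$ and the simple-minded collection $\Cal{S}$ associated with $M$, but starting instead from an arbitrary simple-minded collection $\L=\{L_1,\dots,L_m\}$ of $\D$. First I would set $L:=\bigoplus_i L_i$ and define the positive dg $R$-algebra $B_\L:=\Hom_\A(L,L)^{\op}$; this is positive because $\L$ is a simple-minded collection (so $\Hom_\T(L_i,\Sigma^{<0}L_j)=0$ and the semibrick condition makes $H^0(B_\L)$ a product of division rings, hence semisimple), and it is locally finite by \cref{prop:compatibility-between-M-and-T}, which tells us $\Hom_\T(T,L)$ is finite-length for every $T\in\T$ once we know $\L$ is the simple-minded collection attached to \emph{some} silting object — but to get that one needs the theorem we are proving. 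So instead I would argue local finiteness directly from Hom-finiteness of $\D$ together with the fact that $L\in\D$: each $H^p(B_\L)=\Hom_\T(L,\Sigma^p L)$ with $L\in\D$ Hom-finite Krull--Schmidt by \cref{prop:compatibility-between-M-and-T}(1) applied to the ambient ST-pair. Then define $\Psi_\L\colon\T\to\D(B_\L)$ exactly as $\Psi$ was defined, via the dg functor $\Hom_\A(-,L)\colon H^0(\A)\to\K(B_\L)$ followed by the localization $\K(B_\L)\to\D(B_\L)$.

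Next I would prove (1)–(4) following the template of \cref{prop:negative-to-positive}. Item (1), $\Psi_\L(L)=B_\L$, is immediate from the definition. Item (3), the Hom-isomorphism $\Hom_\T(X,Y)\xrightarrow{\ \simeq\ }\Hom_{\D(B_\L)}(\Psi_\L(Y),\Psi_\L(X))$ for $X\in\C$, $Y\in\D$, is the analogue of \cref{prop:negative-to-positive}(1): one checks that $\Psi_\L$ factors as $\Hom_{H^0\A}(X,L)\to\Hom_{\K(B_\L)}(\Hom_\A(L,L),\Hom_\A(X,L))\to\Hom_{\D(B_\L)}(\Psi_\L(L),\Psi_\L(X))$ and both maps are isomorphisms when $X\in\D$ (the first because $L$ is a complex of "semisimple-like" objects and $\Hom_\A(L,L)$ acts, the second because $B_\L=\Psi_\L(L)$ is a dg projective generator computing $\D(B_\L)$); here the key input is that $\L$ is a simple-minded collection of $\D$, so $\thick\L=\D$ and $\Psi_\L$ sends $\L$ to $B_\L$, hence restricts to a triangle-equivalence $\D\to\per(B_\L)^{\op}$, which is item (2). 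For the target object, once (2) and the Hom-duality hold one also gets that $H^p(\Psi_\L(Y))\cong\Hom_\T(Y,\Sigma^pL)$ naturally in $Y\in\T$.

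The main obstacle — and this is exactly the point where \cref{prop:negative-to-positive}(2) required the two auxiliary lemmas \cref{lem:truncation-and-limit} and \cref{lem:hocolim-approximation} — will be item (4): showing $\Psi_\L$ restricts to an equivalence $\C\to\Dfl(B_\L)^{\op}$. Here I would want the analogue of \cref{prop:negative-to-positive}(2), namely that $\Psi_\L$ induces $\Hom_\T(X,Y)\xrightarrow{\simeq}\Hom_{\D(B_\L)}(\Psi_\L(Y),\Psi_\L(X))$ for \emph{all} $X\in\C$ and $Y\in\T$ (not just $Y\in\D$), together with an identification of $\Psi_\L(M)$ with a silting object of $\Dfl(B_\L)$. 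The catch is that in \cref{prop:negative-to-positive} the silting object $M$ was the given data and $B$ was $\Hom_\A(S,S)^\op$ for $S$ the associated SMC, whereas now $\L$ is the given data; so I would run the argument in the other direction. Concretely: the coaisle description of the ST-structure gives, for $X\in\C\subseteq\bigcup_n\T_M^{\ge -n}$, that $\Hom_\T(X,Y)\xrightarrow{\simeq}\Hom_\T(X,\sigma_M^{\ge -n}Y)$ for $n\gg0$, so $\Hom_\T(X,Y)\cong\lim_n\Hom_\T(X,\sigma_M^{\ge -n}Y)$, exactly as in \cref{lem:truncation-and-limit}; each $\sigma_M^{\ge -n}Y$ lies in $\D$ by (ST3), so case (3)/(2) applies termwise; and the homotopy-colimit comparison of \cref{lem:hocolim-approximation}, whose proof only used that $H^i(\Psi_\L(-))\cong\Hom_\T(-,\Sigma^iL)$ and that $\sigma_M^{\ge -n}Y\to Y$ is a $\Sigma^iL$-equivalence for $n\gg0$ (true since $L\in\D$ and $\Hom_\T(\T_M^{\le -n-1},\Sigma^iL)=0$), carries over verbatim. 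Assembling the same diagram chase as in the proof of \cref{prop:negative-to-positive}(2) then yields the full Hom-duality for $X\in\C$, $Y\in\T$. Finally, since $M$ is a silting object of $\C$ and $\Psi_\L$ is a contravariant fully faithful functor on $\C$ with $\Psi_\L(\C)=\thick_{\D(B_\L)}(\Psi_\L(M))$, the image $\Psi_\L(M)$ is a silting object of $\Dfl(B_\L)^{\op}$; identifying $\Dfl(B_\L)^{\op}$ with $\Psi_\L(\C)$ gives (4). The only genuinely new bookkeeping relative to the proof already in the paper is verifying that $B_\L$ is locally finite and positive purely from the simple-minded collection axioms and the Hom-finiteness of $\D$, without invoking the correspondence itself — i.e. avoiding circularity.
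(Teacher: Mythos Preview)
Your strategy is different from the paper's, and it contains a genuine gap in part~(4).

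\textbf{What the paper does.} The paper does \emph{not} redo the construction of \cref{prop:negative-to-positive} with $L$ in place of $S$. Instead it bootstraps: since $\Psi\colon\D\to\per(B)^{\op}$ is already an equivalence by \cref{prop:negative-to-positive}(1), the image $\Psi(\L)$ is a simple-minded collection of $\per B$. Take $B_\L$ to be the derived endomorphism algebra of $\Psi(L)$ in $\D(B)$; then Keller's theorem yields a triangulated equivalence $\Psi'_\L\colon\D(B)\to\D(B_\L)$ sending $\Psi(L)$ to $B_\L$, and one sets $\Psi_\L:=\Psi'_\L\circ\Psi$. All four properties are then inherited from the already-proven \cref{prop:negative-to-positive} for $\Psi$, because $\Psi'_\L$ is an equivalence that restricts to $\per B\simeq\per B_\L$ and $\Dfl(B)\simeq\Dfl(B_\L)$. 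No new approximation arguments are needed.

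\textbf{What goes wrong in your approach.} Your items (1)--(3), and the extended Hom-duality for $X\in\C$, $Y\in\T$, would indeed go through essentially as you describe: the proofs of \cref{lem:truncation-and-limit} and \cref{lem:hocolim-approximation} only use $L\in\D$, not that $\L$ is the SMC attached to $M$. The problem is the essential surjectivity in (4). You conclude ``$\Psi_\L(M)$ is a silting object of $\Dfl(B_\L)^{\op}$; identifying $\Dfl(B_\L)^{\op}$ with $\Psi_\L(\C)$ gives (4)'' --- but this is circular. Full faithfulness on $\C$ only tells you $\Psi_\L(M)$ is silting in $\thick\Psi_\L(M)=\Psi_\L(\C)\subseteq\Dfl(B_\L)$; it does not give the reverse inclusion. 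In the proof of \cref{prop:negative-to-positive}(2), that reverse inclusion came from the identification $\Psi(M)\simeq S_B$, which in turn rested on the explicit orthogonality $\Hom_\T(M,\Sigma^pS)=0$ for $p\neq0$ of \cref{prop:Hom-duality-between-silting-and-smc}. For an arbitrary simple-minded collection $\L$, the cohomology $H^p(\Psi_\L(M))\cong\Hom_\T(M,\Sigma^pL)$ is generally spread over several degrees, so $\Psi_\L(M)\not\simeq S_{B_\L}$ and you have no direct handle on $\thick\Psi_\L(M)=\Dfl(B_\L)$. The paper's route via the derived Morita equivalence $\Psi'_\L$ sidesteps exactly this issue, since such an equivalence automatically carries $\Dfl(B)$ onto $\Dfl(B_\L)$.
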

\begin{proof}
By \cref{prop:negative-to-positive}, $\Psi(\L)$ is a simple-minded collection of $\per(B)$. Let $B_\L$ be a derived endomorphism ring of $\Psi(\L)$. It is easy to check that $B_\L$ is a locally finite positive dg $R$-algebra. By \cite[Lemma 6.1]{Ke1} there exists a triangulated equivalence $\Psi'_\L\colon\D(B)\to\D(B_\L)$ that sends $\Psi(\L)$ to $B_\L$. By \cref{prop:negative-to-positive}, $\Psi_\L:=\Psi'_\L\circ\Psi$ satisfies the given conditions.
\end{proof}

\begin{prop}\label{prop:silting-SMC-duality}
Let $(\C,\D)$ be an ST-pair inside an algebraic triangulated category $\T$. Then the map
\begin{align*}
\phi\colon\silt(\C)\to\SMC(\D)
\end{align*}
that sends $M$ to its associated simple-minded collection is bijective. In particular, the map $M\mapsto(\D_M^{\leq0},\D_M^{\geq0})$ induces a bijection from $\silt(\C)$ to the set of algebraic $t$-structures on $\D$.
\end{prop}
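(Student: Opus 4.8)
The plan is to prove bijectivity of $\phi\colon\silt(\C)\to\SMC(\D)$ by exhibiting a two-sided inverse built from the functors $\Psi_\L$ of \cref{lem:smc-to-positive-dg-algebra}. Given a simple-minded collection $\L$ of $\D$, \cref{lem:smc-to-positive-dg-algebra} produces a locally finite positive dg $R$-algebra $B_\L$ and a contravariant functor $\Psi_\L\colon\T\to\D(B_\L)$ restricting to equivalences $\D\xrightarrow{\sim}\per(B_\L)^{\op}$ and $\C\xrightarrow{\sim}\Dfl(B_\L)^{\op}$. By \cref{prop:silting-for-positive-dg-algebra} applied to the positive dg algebra $B_\L$ (which has finite-length zeroth cohomology since $B_\L$ is locally finite and positive), there is a silting object $S_{B_\L}$ of $\Dfl(B_\L)$, characterized by $H^*(S_{B_\L})\simeq H^0(B_\L)$. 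Pulling this back through the equivalence, set $\psi(\L):=\Psi_\L^{-1}(S_{B_\L})\in\C$; since $\Psi_\L$ restricts to a (contravariant) equivalence $\C\to\Dfl(B_\L)^{\op}$ and silting objects are preserved by such equivalences, $\psi(\L)$ is a silting object of $\C$. This defines a map $\psi\colon\SMC(\D)\to\silt(\C)$.

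Next I would check $\phi\circ\psi=\mathrm{id}$. Fix $\L\in\SMC(\D)$ and write $M':=\psi(\L)$. I must show the simple-minded collection associated with $M'$ (in the sense of \cref{prop:Hom-duality-between-silting-and-smc}) equals $\L$. The key is the $\Hom$-duality in \cref{lem:smc-to-positive-dg-algebra}(3): for $X\in\C$, $Y\in\D$ we have $\Hom_\T(X,Y)\simeq\Hom_{\D(B_\L)}(\Psi_\L(Y),\Psi_\L(X))$. Since $\Psi_\L(M')=S_{B_\L}$ and $\Psi_\L(\L)=\{B_\L\text{-summands}\}$, more precisely $\Psi_\L$ carries $\L$ to $\ind(\add B_\L)$ (this follows from \cref{lem:smc-to-positive-dg-algebra}(1) together with the fact that $\Psi_\L$ is an equivalence $\D\to\per(B_\L)^{\op}$ sending the simple-minded collection $\L$ to the simple-minded collection $\ind(\add B_\L)$ of $\per(B_\L)$ — note $B_\L$ positive makes $\ind(\add B_\L)$ simple-minded). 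The orthogonality relation of \cref{prop:silting-for-positive-dg-algebra}/\cref{prop:Hom-duality-between-silting-and-smc} for $B_\L$ then reads $\Hom_{\D(B_\L)}(\Sigma^p(\text{summand of }B_\L),\text{summand of }S_{B_\L})\simeq \text{division ring}$ iff matched and $p=0$; transporting through the duality gives that $\L$ satisfies exactly the defining $\Hom$-orthogonality with $M'$. By the uniqueness part of \cref{prop:Hom-duality-between-silting-and-smc} (the associated simple-minded collection is determined by $M'$), we conclude $\phi(M')=\L$.

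For $\psi\circ\phi=\mathrm{id}$, start with a basic silting object $M\in\C$ and let $\Cal{S}=\phi(M)$ be its associated simple-minded collection, so $B=\Hom_\A(S,S)^{\op}$ and $\Psi$ as in \cref{subsection:positive}; here \cref{prop:negative-to-positive} already gives $\Psi(S)\simeq B$ and $\Psi(M)\simeq S_B$ directly, so $B_{\Cal{S}}$ may be taken to be $B$ itself and $\Psi_{\Cal{S}}=\Psi$. Then $\psi(\Cal{S})=\Psi^{-1}(S_B)\simeq M$, using that $\Psi$ restricts to a (contravariant) equivalence $\C\to\Dfl(B)^{\op}$ which is injective on isomorphism classes; one must note $S_B$ is well-defined up to isomorphism (\cref{prop:silting-for-positive-dg-algebra}) and that two basic silting objects with isomorphic images are isomorphic. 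The main obstacle is the careful bookkeeping that $\Psi_\L$ sends $\L$ to $\ind(\add B_\L)$ as a simple-minded collection and that the matching/orthogonality of \cref{prop:Hom-duality-between-silting-and-smc} transports correctly under the contravariance of $\Psi_\L$ — in particular getting indices and shifts to line up so that the uniqueness clause of \cref{prop:Hom-duality-between-silting-and-smc} applies. The final sentence of the proposition is then immediate: compose the bijection $\phi$ with the bijection $\SMC(\D)\to\{\text{algebraic }t\text{-structures on }\D\}$ of \cref{cor:SMC-and-t-structure}, and observe $\T^{\Cal{S},\le0}\cap\D=\D^{\Cal{S}}_{\le0}=\D_M^{\le0}$ (and dually for $\ge0$) by \cref{prop:compatibility-between-M-and-T}.
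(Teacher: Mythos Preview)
Your surjectivity half is essentially the paper's argument: construct $M':=\psi(\L)$ via \cref{lem:smc-to-positive-dg-algebra} and \cref{prop:silting-for-positive-dg-algebra}, then verify $\phi(M')=\L$. One wrinkle: the ``uniqueness part'' of \cref{prop:Hom-duality-between-silting-and-smc} you invoke is not actually stated there. What your orthogonality computation gives you is only $\L\subseteq\T_{M'}^0=\Filt(\phi(M'))$; to conclude $\L=\phi(M')$ you still need \cref{rmk:SMC} (two simple-minded collections, one contained in the extension closure of the other, must coincide). That is precisely how the paper finishes surjectivity.

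The genuine gap is in injectivity. Your map $\psi$ depends on choices: the equivalence $\Psi'_\L$ in the proof of \cref{lem:smc-to-positive-dg-algebra}, and the fixed silting object used to build $\Psi$ in \cref{subsection:positive}. When you write ``$B_{\Cal{S}}$ may be taken to be $B$ itself and $\Psi_{\Cal{S}}=\Psi$'', you are silently \emph{re-choosing} $\psi$ to suit the particular input $M$; this does not establish $\psi\circ\phi=\id$ for a single fixed $\psi$, and hence does not prove $\phi$ injective. Concretely, if $\phi(M)=\phi(N)=\Cal{S}$, your argument produces one choice of $\psi(\Cal{S})$ equal to $M$ and another equal to $N$, with nothing forcing $M\simeq N$. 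The paper bypasses this with a one-line argument you do not use: if $\phi(M)=\Cal{S}=\phi(N)$ then \cref{prop:compatibility-between-M-and-T}(2) gives $\add M=\T^{\Cal{S}}_0=\add N$, whence $M\simeq N$. Insert that line and your proof is complete; the alternative, showing directly that $\psi(\L)$ is independent of all choices, is not easier than this.
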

\begin{proof}
We first show that $\phi$ is injective. Let us suppose $\phi(M)=\Cal{S}=\phi(N)$. Then we have $\add M=\T^\Cal{S}_0=\add N$ by \cref{prop:compatibility-between-M-and-T}, and this implies that $M\simeq N$. Next, we show that the map $\phi$ is surjective. Let $\Cal{S}\in\SMC(\D)$. By \cref{lem:smc-to-positive-dg-algebra} and \cref{prop:silting-for-positive-dg-algebra}, there exists $M\in\silt(\C)$ such that $\Cal{S}\subseteq\T_M^0=\Filt(\phi(M))$. It follows that $\phi(M)=\Cal{S}$ by \cref{rmk:SMC}.

The rest follows from \cref{cor:SMC-and-t-structure}.
\end{proof}

\begin{rmk}
Let $M$ be a basic silting object, and $M=\bigoplus_{i=1}^nM_i$ be a decomposition into indecomposable objects. Let $\Cal{S}=\{S_1,S_2,\ldots,S_n\}$ be an associated simple-minded collection. Then we have
\begin{equation}\label{equation:orthogonal}
\Hom_\T(M_j,\Sigma^pS_i)\simeq\begin{cases} {}_{\End_{\T}(S_i)}\End_{\T}(S_i) & \text{if } i=j\ \text{and } p=0,\\
0 &  \text{otherwise}.\end{cases}
\end{equation}
In addition, $M$ and $\Cal{S}$ determine each other by the condition \cref{equation:orthogonal}.
\end{rmk}

The following corollary follows from  \cref{cor:silting-and-co-t-structure2} and \cref{prop:silting-SMC-duality}.
\begin{thm}\label{thm:four-bijection}
Let $(\C,\D)$ be an ST-pair inside an algebraic triangulated category $\T$. Then there exists a bijective correspondence between
\begin{itemize}
\item[(1)] isomorphism classes of basic silting objects of $\C$,
\item[(2)] bounded co-$t$-structures on $\C$,
\item[(3)] isomorphism classes of simple-minded collections of $\D$,
\item[(4)] algebraic $t$-structures on $\D$.
\end{itemize}
\end{thm}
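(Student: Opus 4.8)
The plan is to assemble the four-way bijection from the two bijections already in hand. By \cref{cor:silting-and-co-t-structure2}, applied to the Krull-Schmidt triangulated category $\C$ (which has a silting object by the definition of an ST-pair), the map $M\mapsto(\C_{M,\geq0},\C_{M,\leq0})$ is a bijection between (1), the isomorphism classes of basic silting objects of $\C$, and (2), the bounded co-$t$-structures on $\C$; note that here one must check $\C$ genuinely has a silting object and is Krull-Schmidt, but this is exactly \cref{def:ST-pair} together with \cite[Remark 4.4(b)]{AMY19}, so no new work is needed. Likewise, by \cref{cor:SMC-and-t-structure} applied to $\D$, the map $\Cal{S}\mapsto(\D^{\Cal{S},\le0},\D^{\Cal{S},\ge0})$ is a bijection between (3), the isomorphism classes of simple-minded collections of $\D$, and (4), the algebraic $t$-structures on $\D$. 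It remains only to connect (1) and (3).

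That connection is precisely \cref{prop:silting-SMC-duality}: the assignment $\phi$ sending a basic silting object $M$ of $\C$ to its associated simple-minded collection $\Cal{S}=\Sim(\T_M^0)$ of $\D$ (\cref{prop:Hom-duality-between-silting-and-smc}) is a bijection $\silt(\C)\to\SMC(\D)$. Composing these three bijections gives the desired four-way correspondence, and the statement is nothing more than the formal conjunction of \cref{cor:silting-and-co-t-structure2}, \cref{prop:silting-SMC-duality}, and \cref{cor:SMC-and-t-structure}.

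The genuine content, and the step I expect to be the main obstacle, lies entirely inside the proof of \cref{prop:silting-SMC-duality} — specifically the surjectivity of $\phi$. Injectivity is cheap: if $\phi(M)=\phi(N)=\Cal{S}$ then $\add M=\T^{\Cal{S}}_0=\add N$ by the left-right symmetry of the ST-pair (\cref{prop:compatibility-between-M-and-T}(2)), whence $M\simeq N$ as both are basic. Surjectivity is where one must actually produce a silting object of $\C$ from an arbitrary simple-minded collection $\Cal{S}$ of $\D$, and this is exactly what the machinery of \cref{subsection:positive} is built for: one uses \cref{lem:smc-to-positive-dg-algebra} to realize $\D$ and $\C$ (contravariantly) as $\per(B_\Cal{S})^{\op}$ and $\Dfl(B_\Cal{S})^{\op}$ for a locally finite positive dg algebra $B_\Cal{S}$, transports $\Cal{S}$ to $B_\Cal{S}$, invokes \cref{prop:silting-for-positive-dg-algebra} to get the silting object $S_{B_\Cal{S}}$ of $\Dfl(B_\Cal{S})$, pulls it back through $\Psi_\Cal{S}$ to a silting object $M\in\silt(\C)$, and checks $\Cal{S}\subseteq\T_M^0=\Filt(\phi(M))$; then \cref{rmk:SMC} forces $\phi(M)=\Cal{S}$. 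So in the end the hard part is not this theorem at all but the two preparatory results \cref{prop:negative-to-positive} and \cref{lem:smc-to-positive-dg-algebra}, which remove the compatibility hypothesis between $\C$ and $\D$ by passing through a positive dg algebra and a homotopy-colimit argument; granting those, \cref{thm:four-bijection} is immediate.
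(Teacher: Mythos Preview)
Your proposal is correct and follows exactly the paper's approach: the theorem is stated as a corollary of \cref{cor:silting-and-co-t-structure2} and \cref{prop:silting-SMC-duality} (the latter already absorbing \cref{cor:SMC-and-t-structure}), and your unpacking of the surjectivity argument inside \cref{prop:silting-SMC-duality} matches the paper's proof verbatim.
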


Now we prove \cref{thm:ST-for-dg-algebra}
\begin{proof}[Proof of \cref{thm:ST-for-dg-algebra}]
It is a special case of \cref{thm:four-bijection}.
\end{proof}

\begin{rmk}
Let $\k$ be a field. For a locally finite non-positive dg $\k$-algebra $A$ satisfying one of the following cases, bijection between $\silt(\per A)$ and the set of algebraic $t$-structures on $\Dfd(A)$ is already known:
\begin{itemize}
\item[(1)] $A=A^0=\k\Delta$, where $\Delta$ is Dynkin quiver (see \cite{KV88}),
\item[(2)] $\k$ is algebraically closed and $\Dfd(A)\subseteq\per A$ (see \cite{KN2}),
\item[(3)] $A=A^0$ (see \cite{KoY14}),
\item[(4)] $\per A\subseteq\Dfd(A)$ (see \cite{SY19} for the case $\k$ is algebraically closed, and \cite{Z23} for the general case).
\end{itemize}
\cref{thm:ST-for-dg-algebra} generalizes all the above cases simultaneously.
\end{rmk}

The following corollary is known for the case $A$ is proper (see \cite[Proposition 3.11]{J23}).
\begin{cor}\label{cor:length-heart-is-functorially-finite}
Let $A$ be a locally finite non-positive dg $R$-algebra. Then every simple-minded collection $\Cal{S}$ of $\Dfl(A)$, and its subset $\Cal{R}$, the extension closure of $\Cal{R}$ is functorially finite in $\Dfl(A)$. 
\end{cor}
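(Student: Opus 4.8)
The plan is to reduce the statement to an elementary fact about Serre subcategories of length abelian categories, after first recognising $\Filt(\Cal{S})$ as a heart whose functorial finiteness in $\Dfl(A)$ is already in hand. First I would apply \cref{prop:silting-SMC-duality} to the ST-pair $(\per A,\Dfl(A))$ — which lies inside the algebraic triangulated category $\Dfl^-(A)$ by \cref{eg:non-positive-is-ST-pair}(5), so that \cref{thm:ST-for-dg-algebra} applies: the given simple-minded collection $\Cal{S}$ of $\Dfl(A)$ is the simple-minded collection associated with some basic silting object $M$ of $\per A$. By \cref{prop:compatibility-between-M-and-T} this yields $\H_\Cal{S}=\Filt(\Cal{S})=\Dfl(A)_M^0$, which is a length abelian category and, by \cref{cor:functorially-finiteness-of-standard-heart}, is functorially finite in $\Dfl(A)$. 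Write $\HH:=\H_\Cal{S}$.

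Next I would invoke transitivity of functorial finiteness. Since $\Filt(\Cal{R})\subseteq\HH$, if $\Filt(\Cal{R})$ is functorially finite \emph{inside} $\HH$, then composing a right (resp. left) $\HH$-approximation in $\Dfl(A)$ with a right (resp. left) $\Filt(\Cal{R})$-approximation taken inside $\HH$ produces the approximations required in $\Dfl(A)$ (using that $\HH$ is a full subcategory). So the remaining task is purely abelian: for a subset $\Cal{R}$ of the (finitely many) simple objects of the length category $\HH$, show that $\Filt(\Cal{R})$ — which by Jordan–Hölder coincides with the Serre subcategory of objects of $\HH$ all of whose composition factors lie in $\Cal{R}$ — is functorially finite in $\HH$.

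For contravariant finiteness I would, given $X\in\HH$, let $t(X)$ be the sum of all subobjects of $X$ lying in $\Filt(\Cal{R})$; since $X$ has finite length this sum is attained and is the largest subobject of $X$ in $\Filt(\Cal{R})$, and any $f\colon B\to X$ with $B\in\Filt(\Cal{R})$ has image a quotient of $B$, hence in $\Filt(\Cal{R})$, hence inside $t(X)$, so $f$ factors through $t(X)\hookrightarrow X$. Dually, letting $Y_0$ be the intersection of all subobjects $Y\subseteq X$ with $X/Y\in\Filt(\Cal{R})$, finite length makes $X/Y_0$ a subobject of a finite product of objects of $\Filt(\Cal{R})$, hence itself in $\Filt(\Cal{R})$ (the Serre subcategory being closed under subobjects), so $X\twoheadrightarrow X/Y_0$ is a left $\Filt(\Cal{R})$-approximation. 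These verifications are routine.

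I do not anticipate a genuine obstacle; the only step using the substance of the paper is the first one, where the ST-correspondence lets one realise $\Cal{S}$ as the simple-minded collection of a silting object and thereby identify $\Filt(\Cal{S})$ with a heart that \cref{cor:functorially-finiteness-of-standard-heart} (itself resting on the left–right symmetry of ST-pairs and \cref{eg:non-positive-is-ST-pair}(6)) has already shown to be functorially finite in $\Dfl(A)$. Everything after that is length-category bookkeeping.
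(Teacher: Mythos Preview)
Your proposal is correct and follows essentially the same route as the paper: use \cref{thm:ST-for-dg-algebra} to realise $\Filt(\Cal{S})$ as the heart $\Dfl(A)_M^0$ for some silting $M$, invoke \cref{cor:functorially-finiteness-of-standard-heart} for functorial finiteness of the heart, and then reduce to functorial finiteness of $\Filt(\Cal{R})$ inside this length heart. The only difference is that the paper outsources this last step to a citation (\cite[Lemma 3.12]{J23}) and leaves transitivity implicit, whereas you spell out both the transitivity and the torsion-pair construction in the length category directly.
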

\begin{proof}
By \cref{thm:ST-for-dg-algebra}, there exists a silting object $M\in\per A$ such that $\Dfl(A)_M^0=\Filt(\Cal{S})$. By \cref{cor:functorially-finiteness-of-standard-heart}, such a heart is functorially finite in $\Dfl(A)$. It is known that $\Filt(\Cal{R})$ is functorially finite in $\Dfl(A)_M^0$ (see for example \cite[Lemma 3.12]{J23}).
\end{proof}


\section*{Acknowledgement}
The author would like to thank my supervisor Hiroyuki Nakaoka for his helpful discussions and feedback on earlier drafts. The author would like to thank Dong Yang for sharing his ideas and continuous encouragement. The author would like to thank Arashi Sakai for guiding me into such an interesting field.


\bibliographystyle{my}
\bibliography{my}

\Addresses

\end{document}